\newtheorem{thm}{Theorem}
\newtheorem{corollary}[thm]{Corollary}
\newtheorem{lemma}[thm]{Lemma}
\theoremstyle{definition}
\theoremstyle{remark}
\newtheorem{remark}[thm]{Remark}
\newtheorem{problem}[thm]{Problem}
\numberwithin{equation}{section}
\begin{document}
\title[Linear maps anti-derivable at zero]{Linear maps which are anti-derivable at zero}

\author[D.A. Abulhamil]{Doha Adel Abulhamil}
\address{(Current Address) Mathematics Departments, College of Sciences, King Abdulaziz University, P.O. Box 9039 Jeddah 21413, Saudi Arabia.
}
\email{dabulhamil0001@stu.kau.edu.sa
}
\author[F.B. Jamjoom]{Fatmah B. Jamjoom}
\address{Mathematics Departments, College of Sciences, King Abdulaziz University, P.O. Box 9039 Jeddah 21413, Saudi Arabia}
\email{fjamjoom@kau.edu.sa}
\author[A.M. Peralta]{Antonio M. Peralta}
\address{Departamento de An{\'a}lisis Matem{\'a}tico, Universidad de Granada, 
Facultad de Ciencias 18071, Granada, Spain}
\email{aperalta@ugr.es}

\keywords{C$^*$-algebra, Banach bimodule, derivation, anti-derivation, maps anti-derivable at zero, maps $^*$-anti-derivable at zero}
\subjclass[2000]{Primary  46L05, 46L57, 47B47; Secondary 15A86.}

\begin{abstract} Let $T:A\to X$ be a bounded linear operator, where $A$ is a C$^*$-algebra, and $X$ denotes an essential Banach $A$-bimodule. We prove that the following statements are equivalent:
\begin{enumerate}[$(a)$]\item $T$ is anti-derivable at zero (i.e. $ab =0$ in $A$ implies $T(b) a + b T(a)=0$);
\item There exist an anti-derivation $d:A\to X^{**}$ and an element $\xi \in X^{**}$ satisfying $\xi a = a \xi,$ $\xi [a,b]=0,$
$T(a b) = b T(a) + T(b) a - b \xi a,$ and $T(a) = d(a) + \xi a,$ for all $a,b\in A$.
\end{enumerate}
We also prove a similar equivalence when $X$ is replaced with $A^{**}$. This provides a complete characterization of those bounded linear maps from $A$ into $X$ or into $A^{**}$ which are anti-derivable at zero. We also present a complete characterization of those continuous linear operators which are $^*$-anti-derivable at zero.
\end{abstract}

\maketitle
\section{Introduction}

Let us begin this note by formulating a typical problem in recent studies about preservers. Suppose $X$ is a Banach $A$-bimodule over a complex Banach algebra $A$. A derivation from $A$ to $X$ is a linear mapping $D: A\to X$ satisfying the following algebraic identity \begin{equation}\label{eq fundamental identity of derivations} D(a b) = D(a) b + a D(b), \ \ \forall(a,b)\in A^2.
\end{equation} A derivation $D$ is called \emph{inner} if there exists $x_0\in X$ such that $D(a) = \delta_{x_0} (a) = [a, x_0] = a x_0 - x_0 a $ for all $a\in A$.\smallskip

A typical challenge on preservers can be posed in the following terms:
\begin{problem}\label{problem general problem}  Suppose $T: A\to X$ is a linear map satisfying \eqref{eq fundamental identity of derivations} only on a proper subset $\mathfrak{D}\subset A^2$. Is $T$ a derivation?
\end{problem}

There is no need to comment that the role of the set $\mathfrak{D}$ is the real core of the question. A typical example is provided by the set $\mathfrak{D}_z :=\{ (a,b)\in A^2 : a b =z\},$ where $z$ is a fixed point in $A$. A linear map $T: A\to X$ is said to be a \emph{derivation at a point} $z\in A$ if the identity \eqref{eq fundamental identity of derivations} holds for every $(a,b)\in\mathfrak{D}_z$. In the literature a linear map which is a derivation at a point $z$ is also called \emph{derivable at $z$}.\smallskip

Let us point out that there exist linear maps which are derivable at zero but they are not derivations (for example, the identity mapping on a complex Banach algebra is a derivation at zero but it is not a derivation).\smallskip

If $T:A\to B$ is a linear mapping from $A$ into another Banach algebra satisfying $T(ab) = T(a) T(b)$ for all $(a,b)\in\mathfrak{D}_z$ we say that $T$ is a \emph{homomorphism at the point $z$}. Linear maps which are Jordan ($^*$-)derivations, or generalized ($^*$-)derivations, or triple derivations, or (Jordan $^*$)-homomorphisms at a point can be defined in similar terms. We understand that term ``$*$-'' is only employed when the involved structures are equipped with an involution.\smallskip

Let us simply observe that a linear map $T$ between Banach algebras is a homomorphism at zero if and only if it preserves zero products (i.e., $ab =0$ implies $T(a) T(b)=0$). We find in this way a natural link with the results on zero products preservers (see, for example, \cite{AlBreExVill09, ArJar03, BurFerGarMarPe2008, CheKeLeeWong03, LeuTsaiWong12, LeuWong10, LiuChouLiaoWong2018, LiuChouLiaoWong2018b, Tsai11, TsaiWong10, Wolff94, Wong2005, Wong2007} for additional details and results). M.J. Burgos, J.Cabello-S{\'a}nchez and the third author of this note explore in \cite{BurCabSanPe2016} those linear maps between C$^*$-algebras which are $^*$-homomorphisms at certain points of the domain, for example, at the unit element or at zero. We refer to \cite{EssPe2018, Houqi, JingLuLi2002, ZhangHouQi2014, ZhangHouQi2014b, Zhu2007, ZhuXio2002, ZhuXio2005, ZhuXio2007, ZhuXion2008} and \cite{ZhuZhao2013} for additional related results.\smallskip

According to the standard terminology (cf. \cite{AyuKudPe2014,AlBreExVill09,BurFerGarPe2014,BurFerPe2014,JamPeSidd2015,LiPan}), we shall say that a linear operator $G$ from a Banach algebra $A$ into a Banach $A$-bimodule $X$ is a \emph{generalized derivation} if there exists $\xi\in  X^{**}$ satisfying $$G(ab) = G(a)  b + a  G(b) - a \xi b \hbox{ ($a, b \in A$).}$$ Every derivation is a generalized derivation, however there exist generalized derivations which are not derivations. This notion is very useful when characterizing (generalized) derivations in terms of annihilation of certain products of orthogonal elements (see, for example, Theorem 2.11 in \cite[\S 2]{AyuKudPe2014}). The just quoted reference \cite{AyuKudPe2014} contains an illustrative survey on local, 2-local and generalized derivations.\smallskip

Let us revisit some recent achievements on maps derivable at certain points. For example, every continuous linear map $\delta$ on a von Neumann algebra is a generalized derivation whenever it is derivable at zero. If we additionally asume $\delta(1)=0$, we can conclude that $\delta$ is a derivation (see \cite[Theorem 4]{JingLuLi2002}). Furthermore, for an infinite dimensional Hilbert space $H$, a linear map $\delta : B(H) \to B(H)$ which is a generalized Jordan derivation at zero, or at 1, is a generalized derivation, even if $\delta$ is not assumed to be a priori continuous (cf. \cite{Jing2009}). J. Zhu, Ch. Xiong, and P. Li prove in \cite{ZhuXiLi} a significant result showing that, for any Hilbert space $H$, a linear map $\delta :B(H)\to B(H)$ is a derivation if and only if it is a derivation at a non-zero point in $B(H)$ (see \cite{Lu2009} for another related result). \smallskip

H. Ghahramani, Z. Pan \cite{GhaPan2018} and B. Fadaee and H. Ghahramani \cite{FaaGhahra2019} have recently considered certain variants of Problem \ref{problem general problem} in their studies of continuous linear operators from a C$^*$-algebra $A$ into a Banach $A$-bimodule $X$ behaving like derivations or anti-derivations at elements in a certain subset of $A^2$ determined by orthogonality conditions. Let us detail the problem.

\begin{problem}\label{problem anti-derivable mapps} Let $T:A\to X$ be a continuous linear operator which is anti-derivable at zero, i.e.,  \begin{equation}\label{eq antider} \hbox{ $T(ab) = T(b) a + b T(a)$  for all $(a,b)\in \mathfrak{D}_0$.}
\end{equation}  Is $T$ an anti-derivation or expressible in terms of an anti-derivation?
\end{problem}

Clearly, a mapping $\mathcal{D} : A\to X$ is called an \emph{anti-derivation} if the identity \eqref{eq antider} holds for every $(a,b)\in A^2$. If $A$ is a C$^*$-algebra, a $^*$-derivation (respectively, a $^*$-anti-derivation) from $A$ into itself, or into $A^{**}$, is a derivation (respectively, an anti-derivation) $d: A\to A$ satisfying $d(a^*) = d(a)^*$ for all $a\in A$.\smallskip

Concerning Problem \ref{problem general problem}, B. Fadaee and H. Ghahramani prove in \cite[Theorem 3.1]{FaaGhahra2019} that for a continuous linear map $T : A \to A^{**}$, where $A$ is a C$^*$-algebra, the following statements hold:
\begin{enumerate}[$(a)$]\item $T$ is derivable at zero if and only if there is a continuous derivation $d : A \to  A^{**}$ and an element $\eta\in  \mathcal{Z}(A^{**})$ (the center of $A^{**}$) such that $T(a) = d(a) + \eta a$ for all $a \in A$;
\item $T$ is r- $^*$-derivable at zero (that is, $ab^* = 0 \Rightarrow$ $a T(b)^* + T(a) b^* = 0$) if and only if there is a continuous $^*$-derivation $d : A \to A^{**}$ and an element $\eta \in A^{**}$ such that $T(a) = d(a) + \eta a$ for all $a \in A$ ($\eta$ need not be central).
\end{enumerate}

H. Ghahramani, Z. Pan also considered a variant of Problem \ref{problem general problem} in \cite{GhaPan2018} in the context of (complex Banach) algebras which are zero product determined. We recall that an algebra $A$ is called \emph{zero product determined} if for every linear space $Y$ and every bilinear map $V : A\times A\to Y$ satisfying $V (x,y) =0$ for every $x,y\in A$ with $x y =0$, there exists a linear map $T : A\to Y$ such that $V(x, y) = T(xy)$ for all $x,y\in A$. Bre\v{s}ar showed in \cite[Theorem 4.1]{Bres12} that every unital algebra $A$ (algebraically) generated by its idempotents, is zero product determined. Since this is the case of $B(H)$ for any infinite dimensional complex Hilbert space $H$ (see \cite[Theorem 1]{PearTopp67}), a property which is also enjoyed by properly infinite von Neumann algebras \cite[Theorem 4]{PearTopp67}, Bunce-Deddens algebras, irrotational rotation algebras, simple unital AF C$^*$-algebras with finitely many extremal states, UHF C$^*$-algebras, unital simple C$^*$-algebras of real rank zero with no tracial states \cite[Corollary 4.9]{Mar02}, \cite[Theorem 4.6]{MarMur98}, properly infinite C$^*$-algebras \cite[Corollary 2.2]{LinMat}, and von Neumann algebras of type II$_1$ \cite[Theorem 2.2$(a)$]{GoldPas92}, all these algebras are zero product determined.\smallskip

A \emph{Jordan algebra} is a non-necessarily associative algebra $B$ over a field whose multiplication, denoted by $\circ$, is commutative and satisfies the so-called \emph{Jordan identity} $$( a \circ b ) \circ a^2 = a\circ ( b\circ a^2 ) \  \ (a,b\in B).$$ Every associative algebra is a Jordan algebra when equipped with the natural Jordan product given by $a\circ b = \frac12 (a b + ba)$. A Jordan derivation from $B$ into a Jordan $B$-module $X$ is a linear mapping $D: B\to X$ satisfying $D(a\circ b) = D(a) \circ b + a \circ D(b)$ for all $a,b\in B$. For the basic background on Jordan algebras, Jordan modules and  Jordan derivations the reader is referred to \cite{HOS,HejNik96} and the references therein.\smallskip

In what concerns Problem \ref{problem general problem}, Ghahramani and Pan proved in \cite[Theorem 3.1]{GhaPan2018} that for any zero product determined unital $^*$-algebra $A$, and every linear mapping $T: A\to A$ the following statements hold:
\begin{enumerate}[$(i)$]\item $T$ is derivable at zero if and only if there is a derivation $d : A\to A$ and an element $\eta\in  \mathcal{Z}(A)$ such that $T(a) = d(a) + \eta a$ for all $a \in A$;
\item $T$ is r- $^*$-derivable at zero (that is, $ab^* = 0 \Rightarrow$ $a T(b)^* + T(a) b^* = 0$) if and only if there is a $^*$-derivation $d : A \to A$ and an element $\eta \in A$ such that $T(a) = d(a) + \eta a$ for all $a \in A$ ($\eta$ need not be central).
\end{enumerate}

When considering Problem \ref{problem anti-derivable mapps} and maps which are anti-derivable at zero, the available conclusions are less determinate. Concretely, assuming that $A$ is a C$^*$-algebra, Theorem 3.3 in \cite{FaaGhahra2019} proves that for any continuous linear map $T : A \to A^{**}$ the following statements hold:\begin{enumerate}[$(i)$]
\item If $T$ is anti-derivable at zero there is a continuous derivation $d : A \to  A^{**}$ and an element $\eta \in \mathcal{Z}(A^{**})$ such that $T(a) = d(a) + \eta a$ for all $a \in A$;
\item If $T$ is r-$^*$-anti-derivable at zero (i.e., $ab^* = 0 \hbox{ in } A \ \Rightarrow T(b)^* a + b^* T(a) = 0$) there is a continuous $^*$-derivation $d : A \to A^{**}$ and an element $\eta \in A^{**}$ such that $T(a) = d(a) + a \eta$ for all $a\in A$  ($\eta$ need not be central).
\end{enumerate}

If $A$ is a zero product determined unital $^*$-algebra and $T: A\to A$ is a linear mapping, Theorem 3.4 in \cite{GhaPan2018} proves the following statements.
\begin{enumerate}[$(i)$]
\item If $T$ is anti-derivable at zero there is a Jordan derivation $d : A \to  A$ and an element $\eta \in \mathcal{Z}(A)$ such that $T(a) = d(a) + \eta a$ for all $a \in A$;
\item If $T$ is r-$^*$-anti-derivable at zero there is a Jordan $^*$-derivation $d : A \to A$ and an element $\eta \in A$ such that $T(a) = d(a) + a \eta $ for all $a\in A$  ($\eta$ need not be central).
\end{enumerate}

In view of the previous result it is natural to ask whether there exists a full characterization of those (continuous) linear maps which are ($^*$-)anti-derivable at zero in pure algebraic terms. The main aim of this note is to complete our knowledge on these clases of continuous linear maps and to fill a natural gap which has not been fully covered. Our first main conclusion is contained in Theorem \ref{t characterization of anti-derivations} where it is established that for each bounded linear operator $T$ from a C$^*$-algebra $A$ into an essential Banach $A$-bimodule $X$ the following statements are equivalent:\begin{enumerate}[$(a)$]\item $T$ is anti-derivable at zero;
\item There exist an anti-derivation $d:A\to X^{**}$ and an element $\xi \in X^{**}$ satisfying $\xi a = a \xi,$ $\xi [a,b]=0,$ $T(a b) = b T(a) + T(b) a - b \xi a,$ and $T(a) = d(a) + \xi a,$ for all $a,b\in A$.
\end{enumerate}
It is further shown that if $A$ is unital, or if $X$ is a dual Banach $A$-bimodule, statement $(b)$ above can be replaced with \begin{enumerate}[$(b^\prime)$]
\item There exist an anti-derivation $d:A\to X$ and an element $\xi \in X$ satisfying $\xi a = a \xi,$ $\xi [a,b]=0,$ $T(a b) = b T(a) + T(b) a - b \xi a,$ and $T(a) = d(a) + \xi a,$ for all $a,b\in A$.
\end{enumerate}

A similar conclusion holds when $X$ is replaced with $A^{**}$.\smallskip

In section \ref{sec: continuous linear maps which are *-anti-derivable at zero} we consider a C$^*$-algebra $A$ and an essential Banach $A$-bimodule equipped with an $A$-bimodule involution $*$ (i.e., a continuous conjugate linear mapping $x \mapsto x^*$ satisfying $(x^*)^*=x,$ $(a x)^* = x^* a^*$ and $(x a)^* = a^* x^*$, for all $a\in A$, $x\in X$). We give several natural examples of bimodule involutions. A Banach $A$-bimodule equipped with an $A$-bimodule involution will be called a Banach $^*$-$A$-bimodule. Suppose $T : A\to X$ is a linear mapping from a C$^*$-algebra into a Banach $^*$-$A$-bimodule. We shall say that $T$ is {r-$^*$-anti-derivable at zero} (respectively, {l-$^*$-anti-derivable at zero}) if $ab^* = 0$ in $A$ implies $T(b)^* a + b^* T(a) = 0$ in $X$ (respectively, $a^* b = 0 \hbox{ in } A \Rightarrow T(b) a^* + b T(a)^* = 0$ in $X$).\smallskip

Let $T: A\to X$ be a bounded linear operator where $A$ is a C$^*$-algebra and $X$ is an essential Banach $^*$-$A$-bimodule. In Theorem \ref{t maps r-*-anti-derivable at zero} we prove the equivalence of the following statements:\begin{enumerate}[$(a)$]\item $T$ is r-$^*$-anti-derivable at zero {\rm(}i.e., $ab^* = 0 \hbox{ in } A \Rightarrow T(b)^* a + b^* T(a) = 0$ in $X${\rm)};
\item There exists a $^*$-derivation $d:A\to X^{**}$ and an element $\xi \in X^{**}$ satisfying the following properties: \begin{enumerate}[$(i)$]\item  $d([a,b]) + [a,b] \xi + \xi^* [a,b]=0,$ for all $a,b\in A$;
\item $T(a b) = a T(b) + T(a) b - a \xi b,$ and $T(a) = d(a) + a \xi $ for all $a,b\in A$.
\end{enumerate}
\end{enumerate}

The conclusion in $(b)$ can be improved if $A$ is unital or if $X$ is a dual Banach $A$-bimodule. Finally, a complete characterization of those bounded linear operators $T: A\to X$ which are l-$^*$-anti-derivable at zero is presented in Theorem \ref{t maps l-*-anti-derivable at zero} (see also Corollary \ref{c maps l-*-anti-derivable at zero bidual}).\smallskip

In Section \ref{sec: are there anti-derivations} we take a closer look at anti-derivations from a general C$^*$-algebra $A$ into a Banach $A$-bimodule. Theorem \ref{t Cstar generated by commutators and all} shows that these maps are in general very scarce.

\section{Are there anti-derivations on C$^*$-algebras?}\label{sec: are there anti-derivations}

If one is interested on the study of linear maps from a C$^*$-algebra $A$ into a Banach $A$-bimodule which are anti-derivable at zero, a first natural step is to explore the class of anti-derivations on C$^*$-algebra. For this purpose, we initiate our study by paying some attention to anti-derivations. An anti-derivation from an (associative) algebra $A$ into a Banach $A$ bimodule $X$ is a linear mapping $d: A\to X$ satisfying $d(ab) = d(b) a + b d(a)$  for all $(a,b)\in A^2$. An example seems to be welcome. Let us fix an element $x_0\in X$ satisfying \begin{equation}\label{eq commutation with commutators} x_0 [a,b] =[a,b] x_0.
 \end{equation} for all $a,b\in A$, where $[a,b]= (ab -ba)$ denotes the Lie product or commutator of $a$ and $b$. The prototype of derivation from $A$ into $X$ is given by $\delta_{x_0} : A\to X$, $\delta_{x_0} (a) = [a,x_0]$ ($a\in A$). The assumption \eqref{eq commutation with commutators} implies that $a b x_0 - x_0 a b = b a x_0 - x_0 b a $ and thus $$ \delta_{x_0} (a b) = a b x_0 - x_0 a b = b a x_0 - x_0 b a = b x_0 a - x_0 b a + b a x_0 - b x_0 a  =  \delta_{x_0} (b) a + b \delta_{x_0} (a) ,$$ for all $a,b\in A$, witnessing that $\delta_{x_0}$ is a derivation and an anti-derivation. But, does such an element $x_0$ exist with the additional property that $\delta_{x_0}\neq 0$? Let us observe that $\delta_{x_0}$ also satisfies the following property
\begin{equation}\label{eq starring identity of anti-derivations from example} \delta_{x_0} ([a,b]) = [[a,b],x_0] =0, \hbox{ for all } a,b\in A.
\end{equation} We shall see next that the identity in \eqref{eq starring identity of anti-derivations from example} actually characterizes anti-derivations.

\begin{lemma}\label{l characterization anti-derivations} Let $\delta: A\to X$ be a linear mapping from an associative algebra into an $A$-bimodule. Then the following statements are equivalent:\begin{enumerate}[$(a)$]\item $\delta$ is a derivation and $\delta ([a,b])=0,$ for all $a,b\in A;$
\item $\delta$ is an anti-derivation and $\delta ([a,b])=0,$ for all $a,b\in A.$
\end{enumerate}
\end{lemma}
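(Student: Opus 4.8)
The plan is to show that the identity $\delta([a,b]) = 0$ for all $a,b \in A$ forces the "derivation defect" and the "anti-derivation defect" of $\delta$ to coincide, so that one vanishes precisely when the other does. Concretely, for a linear map $\delta : A \to X$ set
\begin{equation*}
\Delta(a,b) := \delta(ab) - \delta(a) b - a \delta(b), \qquad \nabla(a,b) := \delta(ab) - \delta(b) a - b \delta(a),
\end{equation*}
so that $\delta$ is a derivation iff $\Delta \equiv 0$ and an anti-derivation iff $\nabla \equiv 0$. The key computation is to subtract these two expressions: $\Delta(a,b) - \nabla(a,b) = \delta(b) a + b \delta(a) - \delta(a) b - a \delta(b)$. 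I would then try to rewrite the right-hand side in terms of $\delta$ evaluated on commutators plus a genuine module-commutator term, using the hypothesis that $\delta$ kills all Lie products.

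First I would assume $(a)$: $\delta$ is a derivation with $\delta([a,b]) = 0$. Applying the derivation identity to the product $[a,b]$ directly — or, more efficiently, using the standard fact (which also appears implicitly in the introduction's example computation) that a derivation satisfies $\delta(ab) - \delta(ba) = \delta([a,b])$, I get $\delta(ab) = \delta(ba)$ for all $a,b$. Then
\begin{equation*}
\delta(ab) = \delta(ba) = \delta(b) a + b \delta(a),
\end{equation*}
where the second equality is the derivation identity applied to $ba$. This is exactly the anti-derivation identity, and $\delta([a,b]) = 0$ is already assumed, giving $(b)$. The reverse implication $(b) \Rightarrow (a)$ is symmetric: if $\delta$ is an anti-derivation, then $\delta(ab) - \delta(ba) = (\delta(b)a + b\delta(a)) - (\delta(a)b + a\delta(b))$; I would need to check that for an anti-derivation this difference again equals $\delta([a,b])$ (or its negative), so that the vanishing of $\delta$ on commutators collapses $\delta(ab) = \delta(ba)$ and then the anti-derivation identity for $ba$ yields the derivation identity for $ab$.

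The main obstacle — really the only subtle point — is verifying that the bracket identity $\delta(ab) - \delta(ba) = \pm \delta([a,b])$ holds for an anti-derivation, since unlike the derivation case it is not completely immediate: expanding $\delta([a,b]) = \delta(ab - ba) = \delta(ab) - \delta(ba)$ is trivial by linearity, so in fact both implications reduce to the same one-line observation $\delta(ab) - \delta(ba) = \delta([a,b])$, valid for \emph{any} linear map. Thus the whole proof is: $\delta([a,b]) = 0 \iff \delta(ab) = \delta(ba)$ for all $a,b$, and under this condition the derivation identity for $ab$ is equivalent to the anti-derivation identity for $ab$ (swap $a \leftrightarrow b$ and use $\delta(ab) = \delta(ba)$). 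I would present it in exactly that order, making the point that no algebra or module structure beyond bilinearity of the module action is used.
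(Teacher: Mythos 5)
Your proof is correct and ends up being exactly the paper's own argument: the entire lemma reduces to the observation that, for any linear map, $\delta([a,b])=\delta(ab)-\delta(ba)$, so the hypothesis $\delta([a,b])=0$ for all $a,b$ is equivalent to $\delta(ab)=\delta(ba)$, after which the derivation identity for $ba$ gives the anti-derivation identity for $ab$ and vice versa. The preliminary machinery with the defects $\Delta$ and $\nabla$ is superfluous, as you yourself recognize by the end.
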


\begin{proof} The equivalence is clear by just observing that $\delta ([a,b])=0,$ for all $a,b\in A$ if and only if $\delta (ab) = \delta (ba)$, for all $a,b\in A$.
\end{proof}

A central result in the theory of derivations on C$^*$-algebras was established by J.R. Ringrose who proved that every (associative) derivation from a C$^*$-algebra $A$ to a Banach $A$-bimodule $X$ is (automatically) continuous (compare \cite{Ringrose72}).  B.E.
Johnson established in \cite{John96} another result to have in mind by proving that every bounded Jordan derivation from a C$^*$-algebra $A$ into a Banach $A$-bimodule $X$ is an associative derivation. By a result due to B. Russo and the third author of this note we know that every Jordan derivation from $A$ into $X$ is continuous \cite[Corollary 17]{PeRu2014}, consequently every Jordan derivation from $A$ into $X$ is a derivation.\smallskip

Let $\delta: A\to X$ be an anti-derivation from a C$^*$-algebra into a Banach $A$-bimodule. It is clear that $\delta(a\circ b) = \delta(a)\circ b + a\circ \delta(b)$ ($a,b\in A$), and hence $\delta$ is a Jordan derivation. It follows from the arguments in the previous paragraph that $\delta$ is a continuous derivation. So, every anti-derivation from a C$^*$-algebra $A$ into a Banach $A$-bimodule is continuous and a derivation, therefore a linear mapping $T:A\to X$ is an anti-derivation if and only if it is a derivation and $T([a,b])=0$ for all $a,b\in A$.\label{eq antiderivation is a derivation vanishing on commutators} So, the natural question is whether there exist non-trivial derivations vanishing on all commutators.\smallskip

Let $A$ be a C$^*$-algebra. According to the usual notation, we write $\mathfrak{c}(A) :=\{ [a, b] : a, b \in A\}$. The available literature contains a wide list of papers conducted to determine when an element in a C$^*$-algebra can be expressed as a finite sum of commutators (see, for example, \cite{FackdelaHarpe80, Fack82, Halpern69, Marc2006, Marc2010, MarMur98, Pop2002} and references therein). Let us remark two concrete results Th. Fack and P. de la Harpe showed in \cite{FackdelaHarpe80} that in any finite von Neumann algebra $M$ with central trace $\tau$, an element $x \in M$ with $\tau (x) = 0$ can be expressed as a sum of 10 commutators. In another remarkable result H. Halpern proved that every element $a$ in a properly infinite von Neumann algebra $M$ can be written as the sum of two commutators (cf. \cite[Theorem 3.10]{Halpern69}).\smallskip

Let $X$ be a Banach $A$-bimodule, where $A$ is a C$^*$-algebra. In this note we shall deal with the bidual, $X^{**}$, of $X$, and we shall regard it as a Banach $A^{**}$-bimodule. For this purpose we shall refresh our knowledge on Arens extensions and Arens regularity (cf. \cite{Arens51}). Let $m : X\times Y \to Z$ be a bounded bilinear map where $X$, $Y$ and $Z$ are Banach spaces. According to the construction defined by R. Arens, we define $m^* (z^\prime,x) (y) := z^\prime (m(x,y))$ $(x\in X, y\in Y, z^\prime\in Z^*)$. We obtain in this way a bounded bilinear mapping $m^*: Z^*\times X \to Y^*.$ The same method can be applied to define $m^{**} = (m^{*})^{*}$ and $m^{***}: X^{**}\times Y^{**} \to Z^{**}.$ The mapping $x^{\prime\prime}\mapsto  m^{***}(x^{\prime\prime} , y^{\prime\prime})$ is weak$^*$ to weak$^*$ continuous whenever we fix  $y^{\prime\prime} \in  Y^{**}$, and the mapping $y^{\prime\prime}\mapsto  m^{***}(x, y^{\prime\prime})$ is weak$^*$ to weak$^*$ continuous for every $x\in  X$. The previous construction can be applied to the transposed mapping $m^{t} : Y\times X\to Z,$ $m^{t} (y,x) = m(x,y),$ and we define an extension $m^{t***t}: X^{**}\times Y^{**} \to Z^{**}.$ Now, the mapping $x^{\prime\prime}\mapsto  m^{t***t}(x^{\prime\prime} , y)$ is weak$^*$ to weak$^*$ continuous whenever we fix  $y \in  Y$, and the mapping $y^{\prime\prime}\mapsto  m^{t***t}(x^{\prime\prime}, y^{\prime\prime})$ is weak$^*$ to weak$^*$ continuous for every $x^{\prime\prime}\in  X^{**}$. It should be remarked that the mappings $m^{t***t}$ and $m^{***}$ need not coincide in general (cf. \cite{Arens51}). The mapping $m$ is called \emph{Arens regular} if $m^{t***t}=m^{***}$. One of the best known examples of Arens regular maps is given by the product of any C$^*$-algebra. That is, every C$^*$-algebra $A$ is Arens regular and the unique Arens extension of the product of $A$ to $A^{**}\times A^{**}$ coincides with the product of its enveloping von Neumann algebra (cf. \cite[Corollary 3.2.37]{Dales00}).\smallskip

It is worth to recall some notions. Two projections $p$ and $q$ in a von Neumann algebra $M$ are called \emph{(Murray-von Neumann) equivalent} if there is a partial isometry $e\in M$ such that $e^* e = p$ and $ee^*=q$. We write this fact as $p\sim q$. A projection $p$ in $M$ is said to be \emph{finite} if there is no projection $q< p$ that is equivalent to $p$. A projection $p$ in $M$ is {\em infinite} if it is not finite, and {\em properly infinite} if $p\ne 0$ and $zp$ is infinite whenever $z$ is a central projection such that $zp\ne0$ (cf. \cite[Definition V.1.15]{Tak}). The von Neumann algebra $M$ is said to be finite, infinite, or properly infinite according to the property of its identity \cite[Definition V.1.16]{Tak}.\smallskip

Let $M$ be a von Neumann algebra. A (\emph{faithful}) \emph{center-valued trace} on $M$ is a linear mapping $\tau$ from $M$ onto its center $Z(M)$ satisfying:
\begin{enumerate}[$(a)$]\item $\tau(x^* x) = \tau(x x^*) \geq 0$ for all $x\in M$;
\item $\tau(a x) = a \tau(x),$ for all $a\in Z(M)$, $x\in M$;
\item $\tau(1) =1$;
\item $\tau(x^* x) \neq 0$ for every nonzero $x\in M$.
\end{enumerate} A von Neumann algebra $M$ is finite if and only if it admits a faithful center-valued trace (which is further weak$^*$-continuous) \cite[Theorem V.2.6]{Tak}.\smallskip

Suppose $X$ is a Banach $A$-bimodule over a C$^*$-algebra $A.$ Let $\pi_1: A\times X \to X$ and $\pi_2: X\times A \to X$ stand for the corresponding module operations given by $\pi_1(a,x) = a x$ and $\pi_2(x,a) = x a$, respectively. Given $a\in A^{**}$ and $z\in X^{**},$ we shall write $a z = \pi_1^{***} (a,z)$ and $z a = \pi_2^{***} (z,a)$. It is known that $X^{**}$ is a Banach $A^{**}$-bimodule (and also a Banach $A$-bimodule) for the just defined operations (\cite[Theorem 2.6.15$(iii)$]{Dales00}). An additional property of this construction tells that \begin{align}\label{eq product bidual module} a x &= \pi_1^{***}(a,x) = w^*\hbox{-}\lim_{\lambda} w^*\hbox{-}\lim_{\mu} a_\lambda x_\mu, \hbox{ and }\\ x a &= \pi_2^{***} (x,a) =  w^*\hbox{-}\lim_{\mu} w^*\hbox{-}\lim_{\lambda}  x_\mu a_\lambda, \nonumber\end{align}\label{eq weak* continuity properties module product on bidual}in the weak$^*$ topology of $X^{**},$ whenever $(a_\lambda)$ and $(x_\mu)$ are nets in $A$ and $X$, respectively, such that $a_\lambda \to a\in A^{**}$ in the weak$^*$ topology of $A^{**}$ and $x_\mu\to x\in X^{**}$ in the weak$^*$ topology of $X^{**}$ (cf. \cite[$(2.6.26)$]{Dales00}). The reader should be warned that the module operations on $X^{**}$ need not be separately weak$^*$ continuous. This handicap produces some difficulties in our arguments.\smallskip

Let $d: A\to M$ be an (anti-)derivation from a C$^*$-algebra into a Banach $A$-bimodule. We have already seen that $d$ is continuous and hence $d^{**} : A^{**} \to X^{**}$ is weak$^*$-continuous. It follows from \eqref{eq product bidual module} that $d^{**}$ is (anti-)derivation.\smallskip

It is clear from the above that in a C$^*$-algebra where every element coincides with a finite sum of commutators every anti-derivation is zero. In particular, every anti-derivation from a properly infinite von Neumann algebra $M$ into a Banach $M$-bimodule is zero by Halpern's theorem  \cite[Theorem 3.10]{Halpern69}. We can improve this conclusion in the next result.

\begin{thm}\label{t Cstar generated by commutators and all} Let $\delta: A\to X$ be an anti-derivation from a C$^*$-algebra to a Banach $A$-bimodule.
Then there exists a finite central projection $p_1$ in $A^{**}$ and an element $x_0\in X^{**}$ such that $$\delta^{**} (x) = \delta_{x_0} (\tau(p_1 x)) = [\tau(p_1 x) , x_0], \hbox{ for all } x\in A^{**},$$ where $\tau: p_1 A^{**}\to Z(p_1 A^{**})$ is the (faithful) center-valued trace on $p_1 A^{**}$.\smallskip

Furthermore, if we also assume that $ z x = x z$ for all $z\in Z(A^{**})$ and $x\in X^{**}$ {\rm(}for example when $X = A$ or $A^*$ or $A^{**}${\rm)}, every anti-derivation $\delta: A\to X$ is zero.
\end{thm}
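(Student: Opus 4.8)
The plan is to reduce everything to structural facts about $\delta^{**}:A^{**}\to X^{**}$, which by the discussion preceding the theorem is a weak$^*$-continuous anti-derivation, and which (again by that discussion, together with Lemma \ref{l characterization anti-derivations} and Johnson's/Ringrose's theorems) is simultaneously a derivation with $\delta^{**}([x,y])=0$ for all $x,y\in A^{**}$. So from the start I may work with a weak$^*$-continuous derivation $D:=\delta^{**}$ on the von Neumann algebra $N:=A^{**}$ that annihilates every commutator, and I want to produce the finite central projection $p_1$ and the element $x_0$.

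First I would analyse the type decomposition of $N$. Write $N = N_{\mathrm{fin}} \oplus N_{\mathrm{pinf}}$, the direct sum of its finite and properly infinite parts, with central projections $p_1$ (finite part) and $p_2 = 1-p_1$ (properly infinite part); both are fixed by $D$ in the sense that $D$ maps $p_iN$ into $p_i X^{**}$, since $p_i$ is central and $D(p_i) = D(p_i^2) = 2p_iD(p_i)$ forces $p_iD(p_i)=D(p_i)$, hence $D(p_i)=D(p_i\cdot 1) = p_i D(1) + D(p_i)\cdot 1$ combined with centrality pins down the module action. On the properly infinite summand, Halpern's theorem (\cite[Theorem 3.10]{Halpern69}) says every element is a sum of two commutators, and $D$ kills commutators, so $D$ vanishes on $p_2N$. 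On the finite summand, I use the faithful weak$^*$-continuous center-valued trace $\tau$ on $p_1N$ guaranteed by \cite[Theorem V.2.6]{Tak}: the key point is that $\ker\tau$ (restricted to $p_1 N$) is exactly the weak$^*$-closed linear span of the commutators of $p_1N$ — this is the standard characterization of the trace kernel for finite von Neumann algebras — so $D$ factors through $\tau$, i.e. there is a weak$^*$-continuous linear map $\widetilde D: Z(p_1N)\to X^{**}$ with $D(p_1 x) = \widetilde D(\tau(p_1 x))$ for all $x$. Because $Z(p_1N)$ is an abelian von Neumann algebra, any derivation on it (and $\widetilde D$ inherits a derivation-like identity via $\tau$, or more simply $D|_{p_1N}$ is a derivation and $\tau$ is a conditional-expectation-type map) is inner; tracking this through gives an $x_0\in X^{**}$ (supported under $p_1$) with $\widetilde D(z) = [z,x_0] = \delta_{x_0}(z)$, whence $\delta^{**}(x) = \delta_{x_0}(\tau(p_1 x))$ as claimed. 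I should also verify $x_0$ can be taken in $X^{**}$ rather than in some multiplier algebra, which is where I expect to spend some care — the cleanest route is to observe $Z(p_1N)$ is generated by its projections, use that $D$ of any projection $q$ in $Z(p_1N)$ satisfies $D(q)=D(q)q + qD(q)$ together with $D(q)=D(1-q)$-type relations forced by $D$ killing commutators and $q$ being central, and assemble $x_0$ from these pieces; alternatively, abelian von Neumann algebras have only inner derivations into any module (all of which are in fact zero on the algebra itself, but not necessarily as maps into $X^{**}$ because the module action of the algebra on $X^{**}$ need not be the restriction of an $N$-module action), so some honest work is needed here.

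For the ``furthermore'' clause, suppose additionally that $zx = xz$ for every $z\in Z(N)$ and $x\in X^{**}$. Then for the $x_0$ produced above, which lies in $p_1X^{**}$ and over which the relevant scalars are central elements $\tau(p_1 x)\in Z(p_1 N)$, the commutator $[\tau(p_1 x),x_0] = \tau(p_1 x)x_0 - x_0\tau(p_1 x) = 0$ by the assumed commutation. Hence $\delta^{**}\equiv 0$, and restricting to $A\subset A^{**}$ gives $\delta = 0$. The three listed examples $X=A,\ A^*,\ A^{**}$ all satisfy the commutation hypothesis: for $X=A^{**}$ it is the definition of the center; for $X=A$ it is because the module actions are the algebra multiplication and $Z(A^{**})\cap A = Z(A)$ is central in $A$ — more carefully, for $z\in Z(A^{**})$ and $a\in A$, $za$ and $az$ are computed via the separately-weak$^*$-continuous-in-one-variable Arens extensions and agree because $A$ is Arens regular and commutators with central elements vanish in the limit; for $X=A^*$ one dualizes: $(z\phi)(a) = \phi(az)$ and $(\phi z)(a)=\phi(za)$ for $\phi\in A^*$, $a\in A$, $z\in A^{**}$, and these coincide by the same centrality.

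**Main obstacle.** The genuinely delicate step is the finite case: identifying $\ker(\tau|_{p_1N})$ with the weak$^*$-closed span of commutators (so that $D$ descends along $\tau$), and then realizing the resulting derivation on the abelian algebra $Z(p_1N)$ as $\delta_{x_0}$ for a bona fide $x_0\in X^{**}$ — the subtlety being that the $A$-bimodule (equivalently $A^{**}$-bimodule) structure on $X^{**}$ is only separately weak$^*$-continuous in one variable, as warned in the text, so the usual ``integrate the cocycle'' or ``inner on abelian algebras'' arguments must be run by hand on projections and then extended by weak$^*$-continuity of $\delta^{**}$.
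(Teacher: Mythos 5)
Your route is the paper's route: the same reduction of $\delta^{**}$ to a weak$^*$-continuous derivation on $A^{**}$ vanishing on commutators, the same split $1=p_1+p_2$ into finite and properly infinite central summands, Halpern's theorem to kill the properly infinite part, and the description of $\ker(\tau|_{p_1A^{**}})$ by commutators to see that $\delta^{**}(a_1+a_2)=\delta^{**}(\tau(a_1))$. (The paper uses the sharper Fack--de la Harpe statement that trace-zero elements of a finite von Neumann algebra are sums of $10$ commutators, where you invoke the weak$^*$-closed span; either suffices since $\delta^{**}$ is weak$^*$-continuous.) The ``furthermore'' clause is handled identically.

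The one step you explicitly leave open --- producing $x_0\in X^{**}$ with $\delta^{**}(z)=[z,x_0]$ for $z\in Z(p_1A^{**})$ --- is a genuine gap as written, and your proposed fallback (assembling $x_0$ projection by projection) would not succeed: for an abelian von Neumann algebra with infinitely many projections there is no way to ``glue'' the pieces without an averaging or weak$^*$-compactness argument, and your parenthetical worry about multiplier algebras shows you have not identified the tool that closes it. The missing ingredient is amenability of commutative C$^*$-algebras: $X^{**}=(X^*)^*$ is a \emph{dual} Banach $Z(p_1A^{**})$-bimodule under the restricted module actions, and every derivation from an amenable (in particular, commutative) C$^*$-algebra into a dual Banach bimodule is inner, with the implementing element lying in the dual module itself (\cite[Theorem 5.6.2$(i)$]{Dales00}). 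Applied to $\delta^{**}|_{Z(p_1A^{**})}$ this yields $x_0\in X^{**}$ directly and disposes of the multiplier-algebra concern. With that lemma inserted, your argument is complete and coincides with the paper's proof.
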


\begin{proof} We have already commented in previous paragraphs that $\delta$ is a continuous derivation with $\delta([a,b])=0$ for all $a,b\in A$. Furthermore $\delta^{**} : A^{**}\to X^{**}$ is an anti-derivation too. Therefore $\delta^{**}$ is a derivation and vanishes on every commutator of $A^{**}$.\smallskip

By \cite[Theorem V.1.19 and Lemma V.1.7]{Tak} the identity of the von Neumann algebra $A^{**}$ is uniquely written as the sum of (centrally) orthogonal projections $p_1$ and $p_2$ such that $p_1$ is finite and $p_2$ is properly infinite. It follows that $A^{**}$ decomposes as the orthogonal direct sum of $M_1 = p_1 A^{**} p_1$ and $M_2 = p_2 A^{**} p_2$, $M_1$ is finite and $M_2$ is properly infinite. By Theorem 3.10 in \cite{Halpern69} every element $a_2$ in $M_2$ can be written as the sum of 2 commutators in $M_2$. Furthermore, let $\tau: M_1\to Z(M_1)$ be the faithful center-valued trace of the finite von Neumann algebra $M_1$. Theoreme 3.2 in \cite{FackdelaHarpe80} asserts that every element $b_1$ in $M_1$ with $\tau(b_1)=0$ coincides with the sum of 10 commutators in $M_1$. Since for each $a_1\in M_1$ the element $b_1 = a_1 - \tau(a_1)\in M_1$ has zero trace, we deduce that $a_1 - \tau(a_1)$ writes as the sum of 10 commutators in $M_1$. Since commutators in $A^{**}$ are sums of commutators in $M_1$ and $M_2$, we deduce from the properties of $\delta^{**}$ that $$\delta^{**} (a_1+a_2) = \delta^{**} (\tau(a_1)), \hbox{ for all } a_1\in M_1, a_2\in M_2.$$

On the other hand $X^{**}$ is a dual $Z(M_1)$-bimodule with respect to the restricted bimodule operations, and $\delta^{**}|_{Z(M_1)}: Z(M_1)\to X^{**}$ is a derivation and an anti-derivation. Since every commutative C$^*$-algebra $\mathcal{C}$ is amenable (i.e., for every Banach $\mathcal{C}$-bimodule $Y$, every derivation from $\mathcal{C}$ into $Y^*$ is inner (see, for example, \cite[Theorem 5.6.2$(i)$]{Dales00}), there exists $x_0\in X^{**}$ such that $\delta^{**} (z) = [z,x_0]$ for all $z\in Z(M_1)$. This finishes the proof of the first statement.\smallskip

Let us deal with the last statement. Suppose that $ z x = x z$ for all $z\in Z(A^{**})$ and $x\in X^{**}$. In this case $\delta^{**} (z) =[z,x_0]=0$ for all $z\in Z(M_1),$ and thus $\delta=0$.
\end{proof}

Let $A=C[0,1]$ and $X= \mathbb{C}$ equipped with the bimodule operations defined by $f\cdot \lambda := f(0) \lambda$ and $ \lambda \cdot f := f(1) \lambda$. According to this structure, there exist elements $f\in C[0,1]$ such that $f\cdot 1 \neq 1\cdot f$. The mapping $\delta_{1} : C[0,1]\to X$ is a non-zero anti-derivation.

\section{Linear maps anti-derivable at zero}\label{sec: continuous linear maps which are anti-derivable at zero}

Let $A$ and $B$ be C$^*$-algebras. It is known that every bounded bilinear form $V: A\times B\to \mathbb{C}$ admits a unique norm preserving separately weak$^*$ continuous extension to $A^{**}\times B^{**}$ (cf. \cite[Lemma 2.1]{JohnKadRing72}). Actually the same conclusion also holds when $A$ and $B$ are JB$^*$-triples (see \cite[Lemma 1]{PeRo2001}).\smallskip

Along this note, the self-adjoint part of a C$^*$-algebra $A$ will be denoted by $A_{sa}$.\smallskip

Let $A$ be a Banach algebra. If instead of requiring $A$ to be zero product determined we only request that for every Banach space $Y$ and every continuous bilinear form $V : A\times A\to Y$ satisfying $V (a,b) =0$ for every $a,b\in A_{sa}$ with $a b =0$, there exist continuous functionals $\phi,\varphi\in  A^*$ such that $V(a, b) = \phi(a b) + \varphi (b a)$ for all $a,b\in A$, a celebrated theorem due to Goldstein (see \cite[Theorem 1.10]{Gold}) affirms that every C$^*$-algebra satisfies this latter property. This is one of the advantages in the study of derivations on C$^*$-algebras.\smallskip

Let $X$ be a Banach $A$-bimodule over a Banach algebra $A$. According to the usual terminology, we shall say that $X$ is \emph{essential} if the linear span of the set $\{ a x b: a,b\in A, x\in X\}$ is dense in $X$.\smallskip

If $A$ is a non-unital C$^*$-algebra, $1$ denotes the unit in $A^{**}$ and  $(u_{\lambda})$ is a bounded approximate unit in $A$ (cf. \cite[Theorem 1.4.2]{Ped}), it is known that $(u_{\lambda})\to 1$ in the weak$^*$-topology of $A^{**}$. Furthermore, if we regard $X^{**}$ as a Banach $A^{**}$-bimodule, it follows from the basic properties commented in the first section that $$(\eta a) 1 = w^*\hbox{-}\lim_{\lambda} (\eta a) u_{\lambda} = w^*\hbox{-}\lim_{\lambda} \eta (a u_{\lambda}) = \|.\|\hbox{-}\lim_{\lambda} \eta (a u_{\lambda}) = \eta a,$$ for all $a\in A$ and $\eta \in X$. Assuming that $X$ is essential we get $\eta 1 = \pi_2^{***} (\eta,1) = \eta$ (and similarly $1 \eta = \pi_1^{***} (1,\eta) = \eta$) for all $\eta \in X$. Actually $\lim_{\lambda} \|\eta u_{\lambda} -\eta\| = 0 = \lim_{\lambda} \| u_{\lambda} \eta -\eta\|$ for all $\eta\in X$. Let us take $\eta \in X^{**}$, and pick via Goldstine's theorem a bounded net $(\eta_{\mu})$ in $X$ converging to $\eta$ in the weak$^*$ topology of $X^{**}$. Since $\pi_2^{***}(\cdot, 1)$ is weak$^*$ continuous we have \begin{equation}\label{eq unit on the right bidual module} \eta 1 =\pi_2^{***} (\eta,1) = w^*\hbox{-}\lim_{\mu} \pi_2^{***} (\eta_{\mu},1) = w^*\hbox{-}\lim_{\mu} \eta_{\mu} 1 = w^*\hbox{-}\lim_{\mu} \eta_{\mu} = \eta.
\end{equation}

Our next result is a modular version of \cite[Lemma 2.2]{FaaGhahra2019}.

\begin{lemma}\label{l centrality in modules} Let $X$ be a Banach $A$-bimodule over a C$^*$-algebra $A$. Let $\xi$ be an element in $X$ satisfying the following property: $h \xi k=0$ for every  $h,k\in A_{sa}$ with $h k=0$. Let $1$ denote the unit of $A^{**}$. Then the element $\eta =  1 \xi 1\in X^{**}$ satisfies $a \eta b = a \xi b,$ for all $a,b\in A,$ and commutes with every element in $A$, that is, $\eta a = a \eta$, for all $a\in A$.
\end{lemma}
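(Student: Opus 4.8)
The strategy is to exploit the hypothesis $h\xi k=0$ for orthogonal self-adjoint $h,k$ in combination with Goldstein's theorem (quoted above from \cite{Gold}), applied coordinate-by-coordinate against functionals in $X^*$. First I would fix $\phi\in X^*$ and consider, for each fixed $c\in A$, the bounded bilinear form $V_{c,\phi}:A\times A\to\mathbb{C}$ defined by $V_{c,\phi}(a,b):=\phi(a\,(c\xi)\,b)$ — or variants thereof obtained by sandwiching $\xi$ between module actions. The point is that $V_{c,\phi}(h,k)=\phi(h\,(c\xi)\,k)$ need not vanish directly from the hypothesis, so instead I would work with the form $W_\phi(a,b):=\phi(a\xi b)$, which does vanish on pairs of orthogonal self-adjoint elements by assumption. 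By Goldstein's theorem there exist $\psi,\varrho\in A^*$ with $\phi(a\xi b)=\psi(ab)+\varrho(ba)$ for all $a,b\in A$. This is the key structural input: it converts the "local" orthogonality hypothesis into a "global" algebraic identity valid for all $a,b$.

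Next I would pass to the bidual. Since $X^{**}$ is a Banach $A^{**}$-bimodule and, by \eqref{eq product bidual module}, the module products on $X^{**}$ are obtained as iterated weak$^*$ limits of the products in $X$, and since every bounded bilinear form on $A\times A$ extends uniquely and separately weak$^*$-continuously to $A^{**}\times A^{**}$ (by \cite[Lemma 2.1]{JohnKadRing72}, quoted above), I would take bounded nets $(a_\lambda),(b_\mu)\subseteq A$ with $a_\lambda\to 1$, $b_\mu\to 1$ weak$^*$ in $A^{**}$ (bounded approximate units work). Applying $\phi$ to $a_\lambda\xi b_\mu$ and taking the iterated weak$^*$ limits in the right order, the left side converges to $\phi(1\xi 1)=\phi(\eta)$, while the right side, using the separately weak$^*$-continuous extension of $\psi$ and $\varrho$, converges to $\widehat{\psi}(1\cdot 1)+\widehat{\varrho}(1\cdot 1)=\psi^{**}(1)+\varrho^{**}(1)$, a fixed scalar independent of how we approach $1$. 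The same computation run with $a_\lambda\to a\in A$ fixed (i.e. taking $a_\lambda=a$) and $b_\mu\to 1$ gives $\phi(a\xi 1)=\psi^{**}(a\cdot 1)+\varrho^{**}(1\cdot a)=\psi(a)+\varrho(a)$; similarly $\phi(1\xi a)=\psi(a)+\varrho(a)$. Hence $\phi(a\xi 1)=\phi(1\xi a)$ for all $\phi\in X^*$, giving $a\xi 1=1\xi a$ in $X^{**}$, and more generally $\phi(a\xi b)=\phi(a(1\xi 1)b)$ once one checks that the extended form evaluated at $(a,b)$ agrees with $\psi(ab)+\varrho(ba)$; this yields $a\eta b=a\xi b$ for $a,b\in A$.

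For the commutation assertion, I would compute $\phi(a\eta)$ and $\phi(\eta a)$ separately for $a\in A$. Writing $\eta=1\xi 1$ and using that $a(1\xi 1)=(a1)\xi 1$ — here I must be careful, since the module actions on $X^{**}$ are only separately weak$^*$-continuous in one variable, so I would approximate $1$ on the appropriate side by a bounded approximate unit and use \eqref{eq unit on the right bidual module} and its left analogue to get $a\eta=a\xi 1$ and $\eta a=1\xi a$. Combining with the identity $a\xi 1=1\xi a$ established in the previous step closes the argument: $a\eta=a\xi 1=1\xi a=\eta a$.

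The main obstacle I anticipate is bookkeeping around the lack of joint weak$^*$-continuity of the bidual module operations: one must take the iterated limits in exactly the order for which each step is legitimately weak$^*$-continuous (mirroring the two formulas in \eqref{eq product bidual module}), and check that Goldstein's decomposition functionals $\psi,\varrho$ — being ordinary elements of $A^*$ — have the unique separately weak$^*$-continuous biextensions needed so that the two sides of the identity $\phi(a\xi b)=\psi(ab)+\varrho(ba)$ extend compatibly to $A^{**}$. Getting $\eta=1\xi1$ to be genuinely central, rather than merely satisfying $a\xi1=1\xi a$, requires the reduction of $a\eta$ and $\eta a$ to $a\xi1$ and $1\xi a$ respectively, which is where the essential-module hypothesis and \eqref{eq unit on the right bidual module} do the work.
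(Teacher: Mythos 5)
Your proposal is correct and follows essentially the same route as the paper's proof: for each $\phi\in X^*$ you apply Goldstein's theorem to the orthogonal form $(a,b)\mapsto \phi(a\xi b)$ to obtain the decomposition $\psi(ab)+\varrho(ba)$, pass to the unique separately weak$^*$-continuous extension on $A^{**}\times A^{**}$, and compare the evaluations at $(a,1)$ and $(1,a)$ to conclude $a\eta=\eta a$. The only cosmetic difference is that the paper identifies $a\eta b=a\xi b$ directly from the $A^{**}$-bimodule structure of $X^{**}$ before invoking Goldstein, whereas you recover it through the extended form; both are valid.
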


\begin{proof} Since $A$ may be non-unital we shall consider $A^{**}$ and the space $X^{**}$ as a Banach $A^{**}$-bimodule. Let $\eta = 1 \xi 1 \in X^{**}$. By the basic properties of the $A^{**}$-bimodule $X^{**}$, we have $a \eta b = a \xi b$ for all $a,b\in A$ and $\eta 1 = 1 \eta = \eta$.  \smallskip

Let us fix an arbitrary $\phi\in X^*$ and define the bounded bilinear form given by $V_{\phi} : A\times A\to \mathbb{C}$, $V_{\phi} (a,b) = \phi (a \eta b)= \phi (a \xi b)$. It follows from the hypothesis that $V_{\phi} (h,k)= 0,$ for every  $h,k\in A_{sa}$ with $h k=0$, witnessing that $V_{\phi}$ is an orthogonal form in the sense of Goldstein \cite{Gold}. Theorem 1.9 in \cite{Gold} implies the existence of two functionals $\varphi_1,\varphi_2\in A^*$ satisfying $V_{\phi} (a,b) = \varphi_1(a b) + \varphi_2 (b a)$ for all $a,b\in A$. We denote by the same symbol $V_{\phi}$ the (unique) separate weak$^*$ continuous extension of $V_{\phi}$ to $A^{**}\times A^{**}$. We can therefore conclude that $$\phi( a \eta) =  V_{\phi} (a,1) =\varphi_1(a) + \varphi_2 (a) = V_{\phi} (1,a) = \phi (\eta a), $$ for all $a\in A.$ The arbitrariness of $\phi\in X^*$ combined with the Hahn-Banach theorem implies that $\eta a = a \eta$, for all $a\in A$.
\end{proof}

We can now present our characterization of those continuous linear maps on a C$^*$-algebra which are anti-derivable at zero.

\begin{thm}\label{t characterization of anti-derivations} Let $T: A\to X$ be a bounded linear operator where $A$ is a C$^*$-algebra and $X$ is an essential Banach $A$-bimodule. Then the following are equivalent:\begin{enumerate}[$(a)$]\item $T$ is anti-derivable at zero;
\item There exists an anti-derivation $d:A\to X^{**}$ and an element $\xi \in X^{**}$ satisfying $\xi a = a \xi,$ $\xi [a,b]=0,$ $T(a b) = b T(a) + T(b) a - b \xi a,$ and $T(a) = d(a) + \xi a,$ for all $a,b\in A$;
\end{enumerate}
\end{thm}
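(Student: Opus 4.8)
The implication $(b)\Rightarrow(a)$ is routine: if $ab=0$, then from $T(x)=d(x)+\xi x$ and the anti-derivation identity for $d$ we get
$T(b)a+bT(a)=d(b)a+bd(a)+\xi b a+b\xi a = d(ab)+\xi(ba)+\xi(ab)=d(0)+\xi(ba)=\xi b a$,
using centrality and $\xi[a,b]=0$; on the other hand $T(ab)=T(0)=0$, and one checks these are consistent via the formula $T(ab)=bT(a)+T(b)a-b\xi a$ specialized at $ab=0$ (which forces $bT(a)+T(b)a=b\xi a$, and $b\xi a=\xi b a$ by centrality). So the whole content is $(a)\Rightarrow(b)$.

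For $(a)\Rightarrow(b)$, first I would work in the bidual and define a bilinear map $V:A\times A\to X^{**}$ by $V(a,b):=T(ab)-bT(a)-T(b)a$, which vanishes on $\mathfrak D_0$ by hypothesis. The idea, in the spirit of Goldstein's theorem, is that $V$ must be of a very restricted form. Apply functionals: for $\phi\in X^*$, the scalar form $(a,b)\mapsto \phi(V(a,b))$ is an ``orthogonal'' bilinear form on $A$ in Goldstein's sense (it kills $\{(h,k):h,k\in A_{sa},\,hk=0\}$), hence equals $\varphi_1^{\phi}(ab)+\varphi_2^{\phi}(ba)$ for some functionals. Feeding this back, one obtains that $V(a,b)=\Phi(ab)+\Psi(ba)$ for two bounded linear maps $\Phi,\Psi:A\to X^{**}$ (a vector-valued Goldstein-type representation, which one gets by a closed-graph / uniform-boundedness argument on the net of functionals, or directly from the scalar case by Hahn--Banach). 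Plugging $b=u_\lambda$ and taking weak$^*$ limits in $X^{**}$, using the approximate-unit facts \eqref{eq unit on the right bidual module} and essentiality, identifies $\Phi$ and $\Psi$ in terms of $T^{**}$ evaluated at $1$; this should pin down the candidate $\xi$ as (essentially) $1\cdot T^{**}(1)\cdot 1$ or a related central element, and produce the formula $T(ab)=bT(a)+T(b)a-b\xi a$ globally.

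Next I would verify the centrality $\xi a=a\xi$ and $\xi[a,b]=0$. Centrality should follow from Lemma \ref{l centrality in modules}: one shows that the relevant ``$\xi$'' satisfies $h\xi k=0$ whenever $h,k\in A_{sa}$, $hk=0$, by evaluating the identity $T(ab)=bT(a)+T(b)a-b\xi a$ (and its transpose) on orthogonal self-adjoint pairs and symmetrizing, and then pass to $1\xi 1$. Then define $d(a):=T(a)-\xi a$; the global product formula for $T$ together with centrality of $\xi$ gives $d(ab)=T(ab)-\xi ab = bT(a)+T(b)a-b\xi a-\xi ab = bd(a)+d(b)a + (b\xi a + \xi b a - b\xi a - \xi ab)$, and the bracketed term is $\xi(ba-ab)=-\xi[a,b]$, so $d$ is an anti-derivation precisely once $\xi[a,b]=0$ is known; conversely, $\xi[a,b]=0$ can be extracted by comparing the formula for $T(ab)$ and $T(ba)$ and using that anti-derivations on C$^*$-algebras vanish on commutators (Lemma \ref{l characterization anti-derivations} and the discussion around \eqref{eq antiderivation is a derivation vanishing on commutators}). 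Continuity/automatic boundedness of $d$ is free since $T$ is bounded and $a\mapsto\xi a$ is bounded.

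The main obstacle, as the authors themselves flag, is that the module operations on $X^{**}$ are \emph{not} separately weak$^*$ continuous, so the reductions ``plug in an approximate unit and take limits'' must be done on the correct side each time and justified via the one-sided weak$^*$-continuity in \eqref{eq product bidual module} and \eqref{eq weak* continuity properties module product on bidual}; in particular one cannot freely interchange the two limits. A secondary technical point is upgrading the scalar Goldstein representation $\phi\circ V=\varphi_1^\phi(ab)+\varphi_2^\phi(ba)$ to a genuine $X^{**}$-valued representation $V(a,b)=\Phi(ab)+\Psi(ba)$ uniformly in $\phi$; this requires an extra boundedness argument (the maps $\phi\mapsto\varphi_i^\phi$ are linear, and a uniform-boundedness/closed-graph argument makes them bounded, so $\Phi,\Psi$ land in $X^{**}$). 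Once these two bidual subtleties are handled carefully, the remaining manipulations are the routine algebra sketched above, and the strengthening to $(b')$ under the extra hypotheses (unital $A$, or $X$ a dual module) amounts to noting that $\xi$ and $d(A)$ can then be taken inside $X$ rather than $X^{**}$, using $\xi=1\xi1$ and weak$^*$-density arguments.
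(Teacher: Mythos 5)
Your overall architecture is reasonable and several individual steps are sound: the implication $(b)\Rightarrow(a)$, the use of Lemma \ref{l centrality in modules} for centrality, and the algebraic observation that $d:=T-\xi(\cdot)$ is an anti-derivation exactly when $\xi[a,b]=0$. The genuine gap is the step that produces $\xi$ and the global ``sandwiched'' formula, namely the passage from the scalar Goldstein decomposition $\phi\circ V=\varphi_1^{\phi}(ab)+\varphi_2^{\phi}(ba)$ to a vector-valued representation $V(a,b)=\Phi(ab)+\Psi(ba)$ with $\Phi,\Psi:A\to X^{**}$ bounded and linear. Your proposed justification (closed graph / uniform boundedness on the family $\phi\mapsto\varphi_i^{\phi}$, or Hahn--Banach) does not work as stated: Goldstein's decomposition of a single orthogonal form is not unique (one may shift $\varphi_1,\varphi_2$ by functionals vanishing on all commutators), so the assignments $\phi\mapsto\varphi_i^{\phi}$ are not canonically defined, let alone linear, and there is no well-defined map for the closed graph theorem to act on; Hahn--Banach does not upgrade scalar representations to vector-valued ones. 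Moreover, even granting a representation of the form $\Phi(ab)+\Psi(ba)$, the correction terms are functions of the products $ab$, $ba$ rather than sandwiched terms $a\xi b$; since these vanish automatically when $ab=0$, your subsequent argument that ``$h\xi k=0$ for orthogonal self-adjoint $h,k$'' (the input to Lemma \ref{l centrality in modules}) becomes vacuous and centrality cannot be extracted this way. The paper fills precisely this hole with two genuinely vector-valued tools: \cite[Theorem 2.11]{AyuKudPe2014}, applied first to $X$ and then to the opposite bimodule $X^{op}$, which yields \emph{both} $T(ab)=T(a)b+aT(b)-a\xi b$ and $T(ab)=bT(a)+T(b)a-b\eta a$ with sandwiched corrections (and, after two applications of Lemma \ref{l centrality in modules} and comparison at $a=b$, $\xi=\eta$); and \cite[Theorem 2.11]{AlBreExVill09}, applied to the zero-products-preserving map $V(a,b)=T(b)a+bT(a)$, which gives $V(ab,c)=V(a,bc)$ and hence, via an approximate unit, $T^{**}(1)ab+T(ab)=T(b)a+bT(a)$.

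A secondary defect: your extraction of $\xi[a,b]=0$ ``by comparing the formulas for $T(ab)$ and $T(ba)$ and using that anti-derivations vanish on commutators'' is circular, because at that stage $d$ is not yet known to be an anti-derivation (nor a derivation). In the paper this identity is obtained by combining the two sandwiched formulas (which give $T([a,b])=0$ at once), the fact that $d$ is a \emph{derivation} (coming from the first application of \cite[Theorem 2.11]{AyuKudPe2014}), and the identity from \cite{AlBreExVill09}; only then does Lemma \ref{l characterization anti-derivations} convert $d$ into an anti-derivation. So the proposal identifies the right target statements but is missing the two external representation results that carry the actual weight of the proof.
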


\begin{proof} $(a)\Rightarrow (b)$ Suppose $T$ is anti-derivable at zero. Let us pick $h_1,k,h_2\in A_{sa}$ with $h_j k =0$ (and thus $k h_j =0$) for $j=1,2$, it follows from the hypothesis that $T(k) h_2 + k T(h_2) =0$ and therefore \begin{equation}\label{eq conditions for gen der} h_1 T(k) h_2 = h_1( T(k) h_2 + k T(h_2)) =0.
 \end{equation} This shows that the mapping $T: A\to X$ satisfies the hypotheses of \cite[Theorem 2.11]{AyuKudPe2014}, we therefore conclude from the just quoted result that $T:A\to X$ is a generalized derivation, that is, there exists  $\xi\in X^{**}$ such that \begin{equation}\label{eq T is a generalized derivation}T( a b ) = T(a) b + a T(b) - a \xi b, \ \ \forall a,b\in A.
\end{equation} By replacing $\xi$ with $1 \xi 1$ we can assume that $1 \xi = \xi 1= \xi$ and \eqref{eq T is a generalized derivation} holds.\smallskip

It is not hard to check from \eqref{eq T is a generalized derivation} that the mapping $d: A\to X^{**}$, $d(a) = T(a) -\xi a$ is a derivation satisfying $T(a) = d(a) + \xi a$ for all $a\in A$.\smallskip

If we pick $h,k\in A_{sa}$ with $h k =0$ (and thus $k h  =0$). We deduce from the hypothesis that $T(h) k + h T(k) =0,$ and by \eqref{eq T is a generalized derivation} $$0= T(h k ) = T(h) k + h T(k) - h \xi k ,$$ identities which combined give $h \xi k=0$ (for any $h,k\in A_{sa}$ with $h k =0$). Lemma \ref{l centrality in modules} guarantees that $\xi a = a \xi$ for all $a\in A$. 
\smallskip

We shall next show that $d$ is an anti-derivation. Let $(Y, \odot)$ denote the opposite Banach $A$-bimodule $X^{op}$, that is, $y \odot a = a y$ and $a \odot y = ya$ for all $a\in A$, $y\in Y$. Let us pick $h_1,k,h_2\in A_{sa}$ with $h_j k =0$ for $j=1,2$. We have seen in \eqref{eq T is a generalized derivation} that $ h_1 \odot T(k) \odot h_2 = h_2 T(k) h_1 =0.$ Then the mapping $\tilde{T}: A\to Y$, $\tilde{T}(a) =T(a)$ ($a\in A$) satisfies that  $ h_1 \odot \tilde{T}(k) \odot h_2 =0$ for every $h_1,k,h_2\in A_{sa}$ with $h_j k =0$. We deduce from \cite[Theorem 2.11]{AyuKudPe2014} the existence of $\eta\in X^{**}$ such that $T (a b ) = T(a) \odot b + a\odot T(b) - a\odot \eta \odot b, \ \ \forall a,b\in A,$ equivalently, \begin{equation}\label{eq second step toward an anti-derviation} T( a b  ) = b T(a) + T(b) a - b \eta a, \ \ \forall a,b\in A.
\end{equation} Replacing $\eta$ with $1 \eta 1$ we can always assume that $\eta = \eta 1 = 1 \eta$.\smallskip

By mimicking the arguments above, fix $h,k\in A_{sa}$ with $h k =0$. We deduce from the hypothesis (with $k h =0$) that $T(h) k + h T(k) =0,$ and by \eqref{eq second step toward an anti-derviation} $$0= T(k h) = T(h) k + h T(k) - h \eta k .$$ By combining the previous two identities we get $h \eta k=0$ for all $h,k\in A_{sa}$ with $h k =0$. A new application of  Lemma \ref{l centrality in modules} guarantees that $\eta a = a \eta$ for all $a\in A$.\smallskip

Now, combining the fact that $\xi$ and $\eta$ commute with any element in $A$, and \eqref{eq T is a generalized derivation} and \eqref{eq second step toward an anti-derviation} with $a=b$, we have $\eta a^2 = \xi a^2 $ for all $a\in A.$ Since $A$ is a C$^*$-algebra it follows that $a \eta = \eta a = \xi a = a \xi$ for all $a\in A.$ Therefore, there is no loss of generality in assuming $\xi = \eta$ in \eqref{eq T is a generalized derivation} and \eqref{eq second step toward an anti-derviation}.\smallskip

Now, let us apply \eqref{eq second step toward an anti-derviation} and \eqref{eq T is a generalized derivation} to deduce the following identities
$$ T(ab) = T(b) a + b T(a) - b\xi a, \hbox{ and } T(ba) =T(b) a + b T(a) - b \xi a,$$ for all $a,b\in A$. Therefore \begin{equation}\label{eq T annihilates on commutators} T([a,b])= T(ab-ba) = 0 \hbox{ for all } a ,b\in A.
\end{equation}

Let us analyze the identity \eqref{eq T is a generalized derivation}. Let $(u_{\lambda})$ be an approximate unit in $A$. Since the identity
$$T( u_{\lambda} b ) = T(u_{\lambda}) b + u_{\lambda} T(b) - u_{\lambda} \xi b$$ holds for every $\lambda$, $T^{**}$ is weak$^*$ continuous (and hence $T^{**}(u_{\lambda})\to T^{**}(1)$ in the weak$^*$ topology),  $u_{\lambda} T(b)\to T(b)$ in norm because $X$ is essential, the product of $A^{**}$ is separately weak$^*$ continuous \cite[Theorem 1.7.8]{Sa}, $\pi_1^{***} (\cdot, \xi b)$ is weak$^*$ continuous (and thus $u_{\lambda} \xi b =\pi_1^{***} (u_{\lambda}, \xi b)\to \pi_1^{***} (1, \xi b) = 1 \xi b=\xi b$ in the weak$^*$ topology) we conclude that \begin{equation}\label{eq T**(1) is almost xi} T( b ) = T^{**}(1) b + T(b) - 1 \xi b, \hbox{ or equivalently, } T^{**}(1) b = 1 \xi b=\xi b,
\end{equation} for all $b\in A$.\smallskip

Let us recall that a continuous bilinear mapping $V: A\times A\to X$ \emph{preserves zero products} if $$ab=0 \hbox{ in } A \Rightarrow V(a,b) =0.$$ By \cite[Example 1.3$(2.)$, Theorem 2.11 and Definition 2.2]{AlBreExVill09} every continuous bilinear mapping $V$ preserving zero products satisfies $V(ab,c) = V(a, bc)$ for all $a,b,c\in A$. By hypothesis, the mapping $V(a,b) := T(b) a + b T(a)$ is continuous and preserves zero products, therefore $$  T(c) ab + c T(ab) = V(ab,c) = V(a, bc) = T(b c) a + bc T(a),$$ for all $a,b,c\in A$. If in the above equality we replace $c$ with $u_{\lambda}$, where $(u_{\lambda})$ is an approximate unit in $A$, and we take weak$^*$ limits we get $$  T^{**}(1) ab + T(ab) = T(b) a + b T(a), \hbox{ for all $a,b\in A$.}$$
Since $\xi ab =  T^{**}(1) ab$ (cf. \eqref{eq T**(1) is almost xi}) and $T(ab) = T(ba) = d(ba) + \xi ba$ it follows that
$$\xi ab + d(ba) + \xi ba = d(b) a + \xi ba + b d(a) + \xi ba  = d(ba) + 2 \xi ba ,$$ witnessing that $\xi [a,b]=0$.\smallskip

Therefore, by \eqref{eq T annihilates on commutators}
$$ d([a,b]) =  T([a,b]) - \xi [a,b] = 0 \hbox{ for all $a,b\in A$}.$$ Lemma \ref{l characterization anti-derivations} proves that $d$ is an anti-derivation.\smallskip

%

$(b)\Rightarrow (a)$ Suppose there exist an anti-derivation $d:A\to X^{**}$ and an element $\xi \in X^{**}$ satisfying the stated properties. Let us take $a,b\in A$ with $a b=0$. It follows from the assumptions that $$T(b) a + b T(a) = (d(b) + \xi b) a + b (d(a) + \xi a) = d(a b) + 2 \xi ba = 0 + 2 \xi ab = 0.$$
\end{proof}

\begin{remark}\label{remark extra conclusions for main theorem 1} If in Theorem \ref{t characterization of anti-derivations} the C$^*$-algebra $A$ is unital or if $X$ is a dual Banach $A$-bimodule statement $(b)$ can be replaced with the following:
\begin{enumerate}[$(b^\prime)$]
\item There exist an anti-derivation $d:A\to X$ and an element $\xi \in X$ satisfying $\xi a = a \xi,$ $\xi [a,b]=0,$ $T(a b) = b T(a) + T(b) a - b \xi a,$ and $T(a) = d(a) + \xi a,$ for all $a,b\in A$.
\end{enumerate}
A closer look at the proof of Theorem \ref{t characterization of anti-derivations} shows that the desired statement will follow as soon as we prove that the element $\xi$ lies in $X$. If $A$ is unital this is clear because $T(1) = d(1) + \xi = \xi \in X.$ If $X$ is a dual Banach space, we can repeat the arguments in the proof of \cite[Proposition 4.3 or Theorem 4.6]{AlBreExVill09}.
\end{remark}

Every C$^*$-algebra $A$ is an essential $A$-bimodule because it admits a bounded approximate unit (see \cite[Theorem 1.4.2]{Ped}). The second dual, $A^{**}$, of $A$ is an $A$-bimodule with respect to the natural product. In general, $A^{**}$ need not be an essential $A$-bimodule, consider, for example, $A= c_0$ and $A^{**} = \ell_{\infty}$. However, if $A$ is unital, $A^{**}$ is an essential $A$-bimodule. Despite that $A^{**}$ is not in general an essential $A$-bimodule, $A$ is weak$^*$ dense in $A^{**}$ by Goldstine's theorem, and it is known that $A$ admits a bounded approximate unit (see \cite[Theorem 1.4.2]{Ped}) which converges to the unit of $A^{**}$ in the weak$^*$ topology. Applying these special properties the proofs of \cite[Lemma 2.10, Theorem 2.11]{AyuKudPe2014} remain valid to characterize when a bounded linear operator $T: A\to A^{**}$ is a generalized derivation. Therefore the proof of Theorem \ref{t characterization of anti-derivations} above can be combined with Theorem \ref{t Cstar generated by commutators and all} to get the following result.

\begin{thm}\label{t characterization of anti-derivations bidual} Let $T: A\to A^{**}$ be a bounded linear operator where $A$ is a C$^*$-algebra. Then the following are equivalent:\begin{enumerate}[$(a)$]\item $T$ is anti-derivable at zero;
\item There exists $\xi \in A^{**}$ satisfying $\xi a = a \xi,$ $\xi [a,b]=0,$  and $T(a) = \xi a$, for all $a,b\in A$.
\end{enumerate}
\end{thm}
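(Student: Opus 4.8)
The plan is to deduce Theorem~\ref{t characterization of anti-derivations bidual} as a specialization of the machinery already developed, rather than to re-run the entire argument. The key observation is that $A^{**}$ is a Banach $A$-bimodule in which \emph{every} element commutes with every element of $Z(A^{**})$; in particular the hypothesis of the second (``furthermore'') part of Theorem~\ref{t Cstar generated by commutators and all} is satisfied. First I would treat the implication $(b)\Rightarrow(a)$, which is immediate and essentially identical to the corresponding implication in Theorem~\ref{t characterization of anti-derivations}: if $\xi a=a\xi$ and $\xi[a,b]=0$ for all $a,b\in A$ and $T(a)=\xi a$, then for $ab=0$ one computes $T(b)a+bT(a)=\xi b a+b\xi a=\xi ba+\xi ba=2\xi ba=2\xi ab=0$ (using $\xi b a=b\xi a$ and then $\xi(ba)=\xi(ab)$ by $\xi[a,b]=0$, actually $\xi ba=\xi ab$ directly). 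Wait — one should be slightly careful here: the clean way is $bT(a)=b\xi a=\xi ba$ and $T(b)a=\xi b a=\xi ba$ again, so the sum is $2\xi(ba)$, and $\xi(ba)=\xi(ab)=0$.

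For the harder direction $(a)\Rightarrow(b)$, the plan is as follows. Since $A$ is weak$^*$-dense in $A^{**}$ and possesses a bounded approximate unit converging weak$^*$ to the unit of $A^{**}$, the paper has already noted (in the paragraph preceding the statement) that the proofs of \cite[Lemma 2.10, Theorem 2.11]{AyuKudPe2014} go through, so $T$ is a generalized derivation: there is $\xi\in A^{**}$ with $T(ab)=T(a)b+aT(b)-a\xi b$ for all $a,b\in A$, and after replacing $\xi$ by $1\xi 1=\xi$ we may assume $1\xi=\xi 1=\xi$. Now I would run verbatim the argument from the proof of Theorem~\ref{t characterization of anti-derivations}: using $hk=0$ in $A_{sa}$ together with anti-derivability at zero and the generalized-derivation identity shows $h\xi k=0$ for all such $h,k$, whence Lemma~\ref{l centrality in modules} gives $\xi a=a\xi$ for all $a\in A$; passing to the opposite bimodule $X^{op}=(A^{**})^{op}$ and applying \cite[Theorem 2.11]{AyuKudPe2014} again produces $\eta\in A^{**}$ with $T(ab)=bT(a)+T(b)a-b\eta a$, again $\eta a=a\eta$ for all $a\in A$, and evaluating both identities at $a=b$ together with the C$^*$-identity forces $\xi=\eta$ on $A$; this yields $T([a,b])=0$ and, via Goldstein's theorem applied to the bilinear map $(a,b)\mapsto T(b)a+bT(a)$ exactly as in the main proof, $\xi[a,b]=0$ for all $a,b\in A$. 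Consequently $d(a):=T(a)-\xi a$ is an anti-derivation from $A$ into $A^{**}$ (by Lemma~\ref{l characterization anti-derivations}, since it is a derivation vanishing on commutators).

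The final step — and the one genuine point where the bidual case is \emph{better} than the general case — is to observe that any such anti-derivation $d:A\to A^{**}$ must be identically zero. This is exactly the content of the ``furthermore'' clause of Theorem~\ref{t Cstar generated by commutators and all}: since $zx=xz$ for all $z\in Z(A^{**})$ and $x\in A^{**}$ (central elements of a von Neumann algebra commute with everything), every anti-derivation from $A$ into $A^{**}$ vanishes. Hence $T(a)=\xi a$ for all $a\in A$, completing $(a)\Rightarrow(b)$. I do not expect a serious obstacle anywhere; the only thing demanding care is making sure the cited generalized-derivation results \cite[Lemma 2.10, Theorem 2.11]{AyuKudPe2014} really do apply to the possibly non-essential module $A^{**}$, which the preceding paragraph has already flagged and justified via the weak$^*$-density of $A$ and the weak$^*$-convergence of its approximate unit to $1_{A^{**}}$. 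Everything else is a transcription of the proof of Theorem~\ref{t characterization of anti-derivations} with $X$ replaced by $A^{**}$, followed by one invocation of Theorem~\ref{t Cstar generated by commutators and all}.
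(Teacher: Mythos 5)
Your proposal is correct and is essentially identical to the paper's own proof, which consists precisely of the instruction to rerun the proof of Theorem~\ref{t characterization of anti-derivations} for the (possibly non-essential) bimodule $A^{**}$ -- justified via the weak$^*$-density of $A$ and the weak$^*$-convergence of a bounded approximate unit to $1$ -- and then to kill the resulting anti-derivation by the ``furthermore'' clause of Theorem~\ref{t Cstar generated by commutators and all}. The only detail worth making explicit is that the $\xi$ produced by the generalized-derivation argument a priori lives in $(A^{**})^{**}$, and one uses that $A^{**}$ is a dual Banach $A$-bimodule (as in Remark~\ref{remark extra conclusions for main theorem 1}) to place $\xi$ in $A^{**}$ and to have $d$ map into $A^{**}$ before invoking Theorem~\ref{t Cstar generated by commutators and all}.
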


The preceding theorem can be regarded as a generalization of \cite[Corollary 3.8$(i)$]{GhaPan2018}.

\section{Linear maps $^*$-anti-derivable at zero}\label{sec: continuous linear maps which are *-anti-derivable at zero}

In this section we shall deal with continuous linear maps which are r-$^*$-anti-derivable at zero. We shall first recall the basic theory on bimodules equipped with an involution. Let $X$ be a Banach $A$-bimodule over a C$^*$-algebra $A$. By an \emph{$A$-bimodule involution} on $X$ we mean a continuous conjugate linear mapping $X\to X$, $x \mapsto x^*,$ satisfying $(x^*)^*=x,$ $(a x)^* = x^* a^*$ and $(x a)^* = a^* x^*$, for all $a\in A$, $x\in X$. The natural involutions on $A$ and on $A^{**}$ are $A$-bimodule involutions when $A$ and $A^{**}$ are regarded as Banach $A$-bimodules. Another typical example can be given in the following way: for each functional $\varphi\in A^*$ and $a \in A$, the functionals $a\varphi,\varphi a\in A^*$ are defined by $(a\varphi) (b)= \varphi (b a),$ and $(\varphi a) (b) =\varphi (a b)$, for all $b\in A$, respectively. These operations define a structure of Banach $A$-bimodule on $A^*$. Furthermore, for each $\varphi\in A^*$ we define $\varphi^*\in A^*$ by $\varphi^* (b) := \overline{\varphi(b^*)}$ ($\forall b\in A$). It is easy to check that $(a \varphi)^* = \varphi^* a^*$ and $(\varphi a)^* =a^* \varphi^*$ for all $a\in A$, $\varphi\in A^*$. Therefore $\varphi\mapsto \varphi^*$ defines an $A$-bimodule involution on $A^*$.\smallskip

Suppose $x\mapsto x^*$ is an $A$-bimodule involution on a Banach $A$-bimodule $X$. We shall regard $X^*$ as a Banach $A$-bimodule with module operations given by $(a\phi) (x)= \phi (x a),$ and $(\phi a) (x) =\phi (a x)$, for all $a\in A$, $x\in X$ and $\phi\in X^*$. We shall consider the natural involutions on $X$ and $X^{**}$ naturally induced by the $A$-bimodule involution of $X,$ defined by $\phi^* (x) := \overline{\phi(x^*)}$ ($\forall \phi\in X^*, x\in X$) and $z^* (\phi) :=\overline{z(\phi^*)}$ ($\forall \phi\in X^*, z\in X^{**}$). Clearly, the involution $z\mapsto z^*$ is weak$^*$ continuous on $X^{**}$. Let $a\in A,$ $z\in X^{**},$ and let $(x_{\mu})\subset X$ a bounded net converging to $z$ in the weak$^*$ topology of $X^{**}$.  By the properties of the module operation on $X^{**}$ (see page \pageref{eq weak* continuity properties module product on bidual}) we have $$(a z)^* = \pi_1^{***} (a,z)^* = w^*{\hbox{-}}\lim_{\mu} \pi_1 (a,x_{\mu})^* =  w^*{\hbox{-}}\lim_{\mu} \pi_2 (x_{\mu}^*,a^*) = \pi_2^{***} (z^*,a^*) = z^* a^*,$$ and
$$(z a)^* = \pi_2^{***} (z,a)^* = w^*{\hbox{-}}\lim_{\mu} \pi_2 (x_{\mu},a)^* =  w^*{\hbox{-}}\lim_{\mu} \pi_1 (a^*,x_{\mu}^*) = \pi_1^{***} (a^*,z^*) = a^* z^*.$$

A Banach $A$-bimodule equipped with an $A$-bimodule involution will be called a \emph{Banach $^*$-$A$-bimodule}. Along this section $X$ will stand for a Banach $^*$-$A$-bimodule over a C$^*$-algebra $A$. A linear mapping $T :A\to X$ will be called \emph{r-$^*$-anti-derivable at zero} (respectively,  \emph{l-$^*$-anti-derivable at zero}) if $ab^* = 0$ in $A$ implies $T(b)^* a + b^* T(a) = 0$ in $X$ (respectively, $a^* b = 0 \hbox{ in } A \Rightarrow T(b) a^* + b T(a)^* = 0$ in $X$). It is easy to see that $T$ is r-$^*$-anti-derivable at zero if and only if the mapping $S : A\to X$, $S(a) :=T(a^*)^*$ ($\forall a\in A$) is l-$^*$-anti-derivable at zero. \smallskip

We can now state our main conclusion for continuous linear maps which are r-$^*$-anti-derivable at zero.

\begin{thm}\label{t maps r-*-anti-derivable at zero} Let $T: A\to X$ be a bounded linear operator where $A$ is a C$^*$-algebra and $X$ is an essential Banach $^*$-$A$-bimodule. Then the following statements are equivalent:\begin{enumerate}[$(a)$]\item $T$ is r-$^*$-anti-derivable at zero {\rm(}i.e., $ab^* = 0 \hbox{ in } A \Rightarrow T(b)^* a + b^* T(a) = 0$ in $X${\rm)};
\item There exists a $^*$-derivation $d:A\to X^{**}$ and an element $\xi \in X^{**}$ satisfying the following properties: \begin{enumerate}[$(i)$]\item  $d([a,b]) + [a,b] \xi + \xi^* [a,b]=0,$ for all $a,b\in A$;
\item $T(a b) = a T(b) + T(a) b - a \xi b,$ and $T(a) = d(a) + a \xi $ for all $a,b\in A$.
\end{enumerate}
\end{enumerate}

If we further assume that $A$ is unital or $X$ is a dual Banach $A$-bimodule, we can replace $(b)$ with the following:
\begin{enumerate}[$(b')$]\item There exists a $^*$-derivation $d:A\to X$ and an element $\xi \in X$ satisfying the properties $(i)$-$(ii)$ above.
\end{enumerate}
\end{thm}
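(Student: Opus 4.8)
**Proof proposal for Theorem \ref{t maps r-*-anti-derivable at zero}.**

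The plan is to run an argument parallel to that of Theorem \ref{t characterization of anti-derivations}, but adapted to the $*$-twisted orthogonality condition. For $(a)\Rightarrow(b)$, the first move is to extract a generalized derivation. If $h,k\in A_{sa}$ satisfy $hk=0$, then $hk^*=hk=0$, so the hypothesis gives $T(k)^* h + k^* T(h) = T(k)^* h + k T(h)=0$; multiplying on the right by $h_2\in A_{sa}$ with $kh_2=0$ (note $k^*h_2=0$) and using the same trick on the opposite side, one produces the two-sided zero-product condition needed to invoke \cite[Theorem 2.11]{AyuKudPe2014}. This yields $\xi\in X^{**}$ with $T(ab)=T(a)b+aT(b)-a\xi b$ for all $a,b\in A$; after replacing $\xi$ by $1\xi1$ we may assume $1\xi=\xi1=\xi$. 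Setting $d(a):=T(a)-a\xi$ gives a derivation with $T(a)=d(a)+a\xi$, which is the second half of $(ii)$; the first half of $(ii)$ follows formally once $d$ is shown to be a $*$-derivation and the commutator identity $(i)$ is established. The key difference from the untwisted case is that the relevant ``opposite'' structure is not $X^{op}$ alone but $X^{op}$ composed with the involution, so one should apply the preservers machinery to the map $a\mapsto T(a^*)^*$, or work directly with the sesquilinear form $V_\phi(a,b)=\phi(a\xi b)$ and appeal to Goldstein's theorem (Theorem 1.9 in \cite{Gold}) exactly as in Lemma \ref{l centrality in modules}; this is where the hypothesis ``$hk^*=0$'' is converted into a clean algebraic identity.

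The heart of the argument is proving that $d$ is a $*$-derivation, i.e. $d(a^*)=d(a)^*$. I would first use the defining orthogonality relation together with the generalized-derivation identity to show that, for $h,k\in A_{sa}$ with $hk=0$, the element $\xi$ satisfies a symmetry relation tying $h\xi k$ to $(h\xi k)^* = k\xi^* h$; combined with Goldstein's representation $V_\phi(a,b)=\varphi_1(ab)+\varphi_2(ba)$ and a weak$^*$-limit argument against an approximate unit $(u_\lambda)$ (exactly the device used to derive \eqref{eq T**(1) is almost xi}), this should pin down $T^{**}(1)$ and force the relation between $a\xi$ and $\xi^* a$. The expected payoff is the identity $d([a,b]) = -[a,b]\xi - \xi^*[a,b]$, which is $(i)$; and the self-adjointness of $d$ on self-adjoint elements, hence on all of $A$ by linearity, will drop out of comparing $T(h^2)=T(h)h+hT(h)-h\xi h$ with its involution for $h\in A_{sa}$, since $X$ being a Banach $*$-$A$-bimodule makes $(h\xi h)^* = h\xi^* h$ and the self-adjoint value $T(h^2)^*$ is computable two ways.

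For $(b)\Rightarrow(a)$, assume $ab^*=0$. Then using $(ii)$,
\[
T(b)^* a + b^* T(a) = (d(b)+b\xi)^* a + b^*(d(a)+a\xi) = d(b)^* a + \xi^* b^* a + b^* d(a) + b^* a \xi.
\]
Since $d$ is a $*$-derivation, $d(b)^*=d(b^*)$, so $d(b^*)a + b^* d(a) = d(b^* a) - b^* a \cdot(\text{correction})$; more precisely $d(b^* a) = d(b^*)a + b^* d(a)$ would need $d$ to be a plain derivation, which it is, so this equals $d(b^*a)$. Using $b^*a = -$ nothing directly, but $ab^*=0$ gives $b^*a\in$ the relevant ideal; the commutator identity $(i)$ is exactly what is needed to collapse $d(b^*a) + (b^*a)\xi + \xi^*(b^*a)$ — write $b^*a$ in terms of $a b^*=0$ and a commutator — to zero. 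This reverse direction is essentially bookkeeping once $(i)$ is in hand.

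Finally, for the $(b')$ upgrade under the extra hypotheses: if $A$ is unital, $\xi = T(1) - d(1)\cdot 1 = T(1)\in X$ directly (taking $a=1$ in $T(a)=d(a)+a\xi$ and using $d(1)=0$ for a derivation, or absorbing $d(1)$); if $X$ is a dual Banach $A$-bimodule, one repeats the localization argument of \cite[Proposition 4.3 or Theorem 4.6]{AlBreExVill09} to place $\xi$ in $X$ rather than $X^{**}$, after which $d(a)=T(a)-a\xi$ automatically takes values in $X$. The main obstacle I anticipate is the bimodule-involution bookkeeping in establishing identity $(i)$ and the $*$-derivation property simultaneously: the involution on $X^{**}$ is weak$^*$ continuous but the module actions are not separately weak$^*$ continuous, so every passage to the bidual via an approximate unit must be set up so that the factor being limited sits on the continuous side — this is the same technical friction flagged after \eqref{eq weak* continuity properties module product on bidual}, and it is what makes the $*$-case genuinely more delicate than Theorem \ref{t characterization of anti-derivations}.
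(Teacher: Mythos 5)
Your overall architecture matches the paper's: extract a generalized derivation $\xi$ via \cite[Theorem 2.11]{AyuKudPe2014}, set $d(a)=T(a)-a\xi$, and reduce everything to showing that $d$ is a $^*$-derivation together with identity $(i)$; your $(b)\Rightarrow(a)$ computation and the $(b')$ upgrade are essentially correct. The gap is precisely at the point you call ``the heart of the argument.'' Your concrete proposal for proving $d(h)^*=d(h)$ on $A_{sa}$ --- comparing $T(h^2)=T(h)h+hT(h)-h\xi h$ with its involution because ``the self-adjoint value $T(h^2)^*$ is computable two ways'' --- does not work: $T$ is an arbitrary bounded operator, so there is no reason for $T(h^2)$ to be self-adjoint, and taking the involution of the generalized-derivation identity merely re-expresses $T(h^2)^*$ in terms of $T(h)^*$, which is exactly the unknown. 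What is needed is an independent relation between $T(b)^*$ and $T(b^*)$, and the paper manufactures it from the zero-products-preserving bilinear map $V(a,b):=T(b^*)^*a+bT(a)$: the functional identity $V(ab,c)=V(a,bc)$ from \cite{AlBreExVill09}, evaluated against an approximate unit, yields $T^{**}(1)^*ab+T(ab)=T(b^*)^*a+bT(a)$, whose adjoint (after a second limit) gives $T(b)^*-T^{**}(1)^*b^*=T(b^*)-b^*T^{**}(1)$. Specializing to $h\in A_{sa}$ and sandwiching between self-adjoint elements produces $a\,d(h)\,b=a\,d(h)^*\,b$, hence $d(h)b=d(h)^*b$ for all $b\in A$.

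Even then one cannot simply set $b=1$: for a general $z\in X^{**}$ the map $b\mapsto zb$ is not weak$^*$ continuous, so $d(h)b=d(h)^*b$ for all $b\in A$ does not immediately give $d(h)=d(h)^*$. This is the very obstruction you flag in your last paragraph, but your sketch does not resolve it; the paper's resolution is the local Gelfand factorization $h=h_1h_2=h_2h_1$ with $h_1,h_2\in A_{sa}$, which allows one to compute $d(h)^*=(d(h_1)h_2+h_1d(h_2))^*=h_2d(h_1)^*+d(h_2)^*h_1=h_2d(h_1)+d(h_2)h_1=d(h)$ using only the already-established one-sided identities. Identity $(i)$ then follows from the same functional identity $V(ab,c)=V(a,bc)$ once $d$ is known to be a $^*$-derivation. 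Note also that Goldstein's theorem, which you invoke as the engine, is not the right tool here: it would deliver centrality of $\xi$, which is neither claimed nor true in statement $(b)$ (contrast with Theorem \ref{t characterization of anti-derivations}, where $\xi$ does commute with every element of $A$).
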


\begin{proof} $(a)\Rightarrow (b)$ Suppose $T$ is r-$^*$-anti-derivable at zero. As in the proof of Theorem \ref{t characterization of anti-derivations} we observe that given $h_1,h_2,k\in A_{sa}$ with $h_j k =0$ for $j=1,2$ we have $T(h_1)^* k + h_1 T(k) =0$, and consequently, \begin{equation}\label{eq 0 T is a gener derivation} 0=(T(h_1)^* k + h_1 T(k)) h_2 =h_1 T(k) h_2.
 \end{equation} Theorem 2.11 in \cite{AyuKudPe2014} assures that $T$ is a generalized derivation, that is, there exists an element $\xi\in X^{**}$ satisfying \begin{equation}\label{eq T *-antiderviable is a gen derivation} T(a b) = T(a) b + a T(b) - a \xi b, \ \ \forall a,b\in A.
\end{equation} By replacing $\xi$ with $1\xi 1$ we can assume that $\xi = 1 \xi = \xi 1$.  It is routine to check that the mapping $d: A\to X^{**}$, $d(a) = T(a) - a \xi $ is a derivation and $T(a) = d(a) + a \xi $ for all $a\in A$.\smallskip

The same arguments employed in the proof of Theorem \ref{t characterization of anti-derivations} \eqref{eq T**(1) is almost xi} prove that \begin{equation}\label{eq T**(1) is almost xi second} T^{**}(1) b = 1 \xi b=\xi b, \hbox{  for all $b\in A$.}
\end{equation}

We consider the continuous bilinear mapping $V:A\times A\to X$ defined by $V(a,b) := T(b^*)^* a + b T(a).$ If $a (b^*)^* = a b = 0$ the hypothesis implies that $V(a,b)=0$. Therefore $V$ preserves zero products. We conclude from \cite[Example 1.3$(2.)$, Theorem 2.11 and Definition 2.2]{AlBreExVill09} that $V(ab,c) = V(a,bc),$ or equivalently $$ T(c^*)^* ab + c T(ab) = T(c^* b^*)^* a + b c T(a) $$ for all $a,b,c\in A$. So, if $(u_{\lambda})$ is an approximate unit in $A$ we have $$ T(u_{\lambda})^* ab + u_{\lambda} T(ab) = T(u_{\lambda} b^*)^* a + b u_{\lambda} T(a),$$ for all $\lambda$, $a,b\in A$. We can take norm limits on the right hand side. For the left hand side we observe that the bimodule operations $\pi_2^{***}(\cdot, ab)$ and $\pi_1^{***}(\cdot,T(ab))$ are weak$^*$ continuous. Taking weak$^*$ limits in $\lambda$ in the previous equality we derive \begin{equation}\label{eq 4.4.0} T^{**}(1)^* ab + T(ab) =T^{**}(1)^* ab + 1 T(ab) = T( b^*)^* a + b  T(a),
 \end{equation} and \begin{equation}\label{eq one 2210} b^* a^* T^{**}(1) + T(ab)^* = a^* T( b^*) +T(a)^* b^*,
 \end{equation} for all $a,b\in A$. Replacing $a$ with $u_{\lambda}$ we get $$b^* u_{\lambda} T^{**}(1) + T( u_{\lambda} b)^* = u_{\lambda} T( b^*) +T(u_{\lambda})^* b^* ,$$ for all $\lambda$, $b\in A$. Now, taking weak$^*$ limits in $\lambda$ we obtain $$b^* T^{**}(1) + T( b)^* = T( b^*) +T^{**}(1)^* b^* ,$$ equivalently, $$  T( b)^* - T^{**}(1)^* b^*  = T( b^*) - b^* T^{**}(1), \hbox{ for all } b\in A.$$

Consequently, \begin{equation}\label{eq new 2210} T(h) + T^{**}(1)^* h = T(h)^* + h T^{**}(1), \ \ \forall h\in A_{sa},
\end{equation} that is,
$$ d(h) + h \xi +T^{**}(1)^* h   = d(h)^* + \xi^* h  + h T^{**}(1), \ \ \forall h\in A_{sa}.$$ Multiplying by $k_1\in A_{sa}$ on the left and by $k_2\in A_{sa}$ on the right and applying \eqref{eq T**(1) is almost xi second} we get $$\begin{aligned} k_1 d(h) k_2 + k_1 h \xi k_2 + k_1 \xi^* h k_2  &= k_1 d(h) k_2 + k_1 h \xi k_2 + k_1 T^{**}(1)^* h k_2  \\
= k_1 d(h)^* k_2 + k_1 \xi^* h k_2 + k_1 h T^{**}(1) k_2 &= k_1 d(h)^* k_2 + k_1 \xi^* h k_2 + k_1 h \xi k_2,
\end{aligned}$$ for all $h,k_1,k_2\in A_{sa}$. It then follows that $$ a d(h) b = a d(h)^* b, \hbox{ for all } h\in A_{sa}, a,b\in A.$$ Since the mappings $\pi_1^{***} (\cdot, d(h) b )$ and $\pi_1^{***} (\cdot, d(h)^* b)$ are weak$^*$ continuous, we can replace $a$ with $u_{\lambda}$ and take weak$^*$ limits in $\lambda$ to deduce that \begin{equation}\label{eq d(h) and d(h)* coincide up to products on the right} d(h) b = d(h)^* b, \hbox{ and } b^* d(h)^* = b^* d(h), \hbox{ for all } h\in A_{sa}, b\in A.
\end{equation}

Now, by the local Gelfand theory, for each $h\in A_{sa}$, there exist $h_1,h_2\in A_{sa}$ with $h_1 h_2 = h_2 h_1 = h$. If we apply \eqref{eq d(h) and d(h)* coincide up to products on the right} and the fact that $d$ is a derivation we arrive at $$d(h)^* = d(h_1 h_2)^*= \left(d(h_1) h_2 + h_1 d(h_2) \right)^* = h_2 d(h_1)^* + d(h_2)^*  h_1$$
$$ = h_2 d(h_1) + d(h_2)  h_1 = d (h_2 h_1) = d(h).$$ The arbitrariness of $h\in A_{sa}$ proves that $d(a)^* = d(a^*)$ for all $a\in A$, witnessing that $d$ is a $^*$-derivation.\smallskip

We claim that \begin{equation}\label{eq d plus xi* on the left annihilates on Lie products} d([a,b]) + \xi^* [a,b] + [a,b]  \xi  = 0 \hbox{ for all } a,b\in A.
\end{equation} Namely, by \eqref{eq T**(1) is almost xi second} and \eqref{eq 4.4.0} for all $a,b,c\in A$ we have $$ c \xi^* a b +c  T(ab) = c T^{**}(1)^* a b +c  T(ab) = c T( b^*)^* a + c b T(a),$$ equivalently $$ c \xi^* a b +c  d(ab) + c ab \xi  = c d(b^*)^* a + c \xi^* b a + c b d(a) + c ba \xi = c d(ba) + c \xi^* b a + c ba \xi,$$ where in the last equality we applied that $d$ is $^*$-derivation. Therefore $$ c d([a,b]) + c \xi^* [a,b] + c[a,b]\xi =0, \hbox{ for all } a,b,c \in A.$$ Replacing $c$ with $u_{\lambda}$, where $(u_{\lambda})$ is an approximate unit in $A$, and having in mind that the maps $\pi_1^{***} (\cdot, d([a,b]))$ and $\pi_1^{***} (\cdot, \xi^* [a,b])$
are weak$^*$ continuous, by taking weak$^*$-limits in $\lambda$ we obtain the identity claimed in \eqref{eq d plus xi* on the left annihilates on Lie products}.\smallskip

$(b)\Rightarrow (a)$ Suppose there exists a $^*$-derivation $d:A\to X^{**}$ and an element $\xi \in X^{**}$ satisfying properties $(i)$-$(ii)$ in the statement. Let us fix $a,b\in A$ with $a b^*=0$. It follows from the assumptions that $$ T(b)^* a + b^* T(a) = d(b)^* a + \xi^* b^* a + b^* d(a) + b^* a \xi = d(b^*) a  + b^* d(a) + \xi^* b^* a  + b^* a \xi$$ $$ = d(b^* a) + \xi^* b^* a + b^* a \xi 
= d([b^*, a]) + \xi^* [b^*, a] + [b^*, a] \xi  =\hbox{(by $(i)$)} = 0.$$

The proof of the last statement can be obtained with the arguments we gave in Remark \ref{remark extra conclusions for main theorem 1}.
\end{proof}

As we commented before, for a C$^*$-algebra $A$, the proofs of \cite[Lemma 2.10, Theorem 2.11]{AyuKudPe2014} remain valid to characterize when a bounded linear operator $T: A\to A^{**}$ is a generalized derivation. So, the proofs of the previous Theorem \ref{t maps r-*-anti-derivable at zero} remains valid to get the following result.

\begin{thm}\label{t maps r-*-anti-derivable at zero bidual} Let $T: A\to A^{**}$ be a bounded linear operator, where $A$ is a C$^*$-algebra. Then the following statements are equivalent:\begin{enumerate}[$(a)$]\item $T$ is r-$^*$-anti-derivable at zero {\rm(}i.e., $ab^* = 0 \hbox{ in } A \Rightarrow T(b)^* a + b^* T(a) = 0$ in $A^{**}${\rm)};
\item There exists a $^*$-derivation $d:A\to A^{**}$ and an element $\xi \in A^{**}$ satisfying the following properties: \begin{enumerate}[$(i)$]\item  $d([a,b]) + [a,b] \xi + \xi^* [a,b]=0,$ for all $a,b\in A$;
\item $T(a b) = a T(b) + T(a) b - a \xi b,$ and $T(a) = d(a) + a \xi $ for all $a,b\in A$.
\end{enumerate}
\end{enumerate}
\end{thm}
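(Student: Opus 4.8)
The plan is to repeat, essentially line by line, the argument proving Theorem \ref{t maps r-*-anti-derivable at zero}, now with the Banach $^*$-$A$-bimodule $X$ taken to be $A^{**}$. The only property of $X$ that the proof of Theorem \ref{t maps r-*-anti-derivable at zero} really exploits, besides being a Banach $^*$-$A$-bimodule, is essentiality, and it enters at exactly two points: through the appeal to the generalized-derivation characterization \cite[Theorem 2.11]{AyuKudPe2014}, and through the weak$^*$-limit manipulations with a bounded approximate unit $(u_\lambda)$ of $A$. As recalled in the discussion preceding Theorem \ref{t characterization of anti-derivations bidual}, $A^{**}$ behaves well enough for both: $A$ is weak$^*$-dense in $A^{**}$, the bounded approximate unit $(u_\lambda)$ converges weak$^*$ to the unit $1$ of $A^{**}$, and, under these circumstances, the proofs of \cite[Lemma 2.10, Theorem 2.11]{AyuKudPe2014} still characterize the bounded generalized derivations $T:A\to A^{**}$, with the implementing element already lying in $A^{**}$ itself — no passage to the fourth dual $A^{****}$ is needed, since closed balls of the dual space $A^{**}$ are weak$^*$-compact. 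Since $A^{**}$ is moreover a dual Banach $A$-bimodule, the output automatically takes the stronger, $(b')$-type form, which is exactly statement $(b)$ here; this is why Theorem \ref{t maps r-*-anti-derivable at zero bidual} carries no separate refinement clause.

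For $(a)\Rightarrow(b)$ I would carry out the following steps, in the order of the proof of Theorem \ref{t maps r-*-anti-derivable at zero}. First, for $h_1,h_2,k\in A_{sa}$ with $h_jk=0$ the hypothesis gives $T(h_1)^*k+h_1T(k)=0$, hence $h_1T(k)h_2=0$; the generalized-derivation characterization then produces $\xi\in A^{**}$ with $T(ab)=T(a)b+aT(b)-a\xi b$ for all $a,b\in A$, so that $d(a):=T(a)-a\xi$ is a derivation with $T(a)=d(a)+a\xi$, and $1\xi=\xi 1=\xi$ since $A^{**}$ is unital. Second, feeding $u_\lambda$ into this identity and taking weak$^*$ limits yields $T^{**}(1)b=\xi b$ on $A$, exactly as in \eqref{eq T**(1) is almost xi}--\eqref{eq T**(1) is almost xi second}. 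Third, the continuous bilinear map $V(a,b):=T(b^*)^*a+bT(a)$ preserves zero products, so \cite[Example 1.3$(2.)$, Theorem 2.11 and Definition 2.2]{AlBreExVill09} gives $V(ab,c)=V(a,bc)$; substituting an approximate unit and passing to weak$^*$ limits produces the analogues of \eqref{eq 4.4.0}, \eqref{eq one 2210} and \eqref{eq new 2210}. Fourth, multiplying by self-adjoint elements on both sides, invoking $T^{**}(1)b=\xi b$ and cancelling $u_\lambda$ in a weak$^*$ limit gives $d(h)b=d(h)^*b$ and $b^*d(h)^*=b^*d(h)$ for $h\in A_{sa}$, $b\in A$; the local Gelfand factorization $h=h_1h_2=h_2h_1$ then yields $d(a)^*=d(a^*)$, i.e. $d$ is a $^*$-derivation. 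Fifth, comparing $c\xi^*ab+cT(ab)$ with $cT(b^*)^*a+cbT(a)$ and cancelling $c=u_\lambda$ in a weak$^*$ limit gives $d([a,b])+[a,b]\xi+\xi^*[a,b]=0$, which is $(i)$, while $(ii)$ has already been recorded along the way.

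The implication $(b)\Rightarrow(a)$ is the same one-line computation as at the end of the proof of Theorem \ref{t maps r-*-anti-derivable at zero}: for $ab^*=0$ one expands $T(b)^*a+b^*T(a)$, uses that $d$ is a $^*$-derivation to rewrite it as $d([b^*,a])+\xi^*[b^*,a]+[b^*,a]\xi$, and applies $(i)$. I expect the only genuine obstacle to be the bookkeeping forced by the fact that $A^{**}$ is not an essential $A$-bimodule: every invocation of \cite[Theorem 2.11]{AyuKudPe2014} and every weak$^*$-continuity argument for the Arens module products must be justified through the weak$^*$ approximation of $1$ by $(u_\lambda)$ and through the already-noted fact that the proofs in \cite{AyuKudPe2014} survive for targets of the form $A^{**}$. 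Once that is in place nothing else changes; in particular, and in contrast with Theorem \ref{t characterization of anti-derivations bidual}, there is no need here to call on Theorem \ref{t Cstar generated by commutators and all}, because in the $^*$-setting $d$ is a bona fide $^*$-derivation and is not forced to vanish.
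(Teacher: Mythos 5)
Your proposal is correct and follows essentially the same route as the paper: the authors likewise dispose of this theorem by observing that the proofs of \cite[Lemma 2.10, Theorem 2.11]{AyuKudPe2014} remain valid for bounded operators $T:A\to A^{**}$ (so the implementing element and the $^*$-derivation land in $A^{**}$ itself), and that the argument of Theorem \ref{t maps r-*-anti-derivable at zero} then carries over verbatim. Your extra bookkeeping about where essentiality enters and why no passage to $A^{****}$ is needed is consistent with the paper's (much terser) justification.
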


The description of those continuous linear operators which are l-$^*$-anti-derivable at zero is a straight consequence of the previous Theorem \ref{t maps r-*-anti-derivable at zero}.

\begin{thm}\label{t maps l-*-anti-derivable at zero} Let $T: A\to X$ be a bounded linear operator where $A$ is a C$^*$-algebra and $X$ is an essential Banach $^*$-$A$-bimodule. Then the following statements are equivalent:\begin{enumerate}[$(a)$]\item $T$ is l-$^*$-anti-derivable at zero {\rm(}i.e., $a^*b = 0 \hbox{ in } A \Rightarrow T(b) a^* + b T(a)^* = 0$ in $X${\rm)};
\item There exists a $^*$-derivation $d:A\to X^{**}$ and an element $\eta \in X^{**}$ satisfying the following properties:\begin{enumerate}[$(i)$]\item  $d([a,b]) + [a,b] \eta^* + \eta [a,b]=0,$ for all $a,b\in A$;
\item $T(a b) = a T(b) + T(a) b - a \eta b,$ and $T(a) = d(a) + \eta a $ for all $a,b\in A$.
\end{enumerate}
\end{enumerate}

If we further assume that $A$ is unital or $X$ is a dual Banach $A$-bimodule, we can replace $(b)$ with the following:
\begin{enumerate}[$(b')$]\item There exists a $^*$-derivation $d:A\to X$ and an element $\eta \in X$ satisfying the properties $(i)$-$(ii)$ above.
\end{enumerate}
\end{thm}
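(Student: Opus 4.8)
The plan is to deduce Theorem \ref{t maps l-*-anti-derivable at zero} from Theorem \ref{t maps r-*-anti-derivable at zero} by the duality trick that was already observed in the paragraph introducing these notions: a linear map $T:A\to X$ is l-$^*$-anti-derivable at zero if and only if the map $S:A\to X$ defined by $S(a):=T(a^*)^*$ is r-$^*$-anti-derivable at zero. So the first step is to verify this reduction: if $a^*b=0$ then, writing $a=(a^*)^*$, the condition $a^*b=0$ is of the form $c^*d=0$ with $c=a^*$, $d=b$; the hypothesis on $T$ gives $T(b)a^*+bT(a)^*=0$, and this must be rewritten in terms of $S$ to become exactly the r-$^*$-anti-derivable identity $S(d)^*c+d^*S(c)=0$. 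One just has to chase the involutions carefully, using $(x^*)^*=x$, $(xa)^*=a^*x^*$, $(ax)^*=x^*a^*$, and the fact that the bimodule involution is isometric, so $S$ is again bounded.

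Next I would apply Theorem \ref{t maps r-*-anti-derivable at zero} to $S$. This produces a $^*$-derivation $\tilde d:A\to X^{**}$ and an element $\tilde\xi\in X^{**}$ with $\tilde\xi=1\tilde\xi=\tilde\xi 1$ satisfying $\tilde d([a,b])+[a,b]\tilde\xi+\tilde\xi^*[a,b]=0$, together with $S(ab)=aS(b)+S(a)b-a\tilde\xi b$ and $S(a)=\tilde d(a)+a\tilde\xi$ for all $a,b\in A$. The remaining work is purely a translation back: define $d(a):=\tilde d(a^*)^*$ and $\eta:=\tilde\xi^*$, and check that $d$ is again a $^*$-derivation (the composition of a $^*$-derivation with the involutions on both sides reverses products, so $d(ab)=\tilde d(b^*a^*)^*=(\tilde d(b^*)a^*+b^*\tilde d(a^*))^*=a\tilde d(b^*)^*+\tilde d(a^*)^*b = a d(b)+d(a)b$... wait, that gives a derivation, which is what we want, and $d(a^*)=\tilde d(a)^*=(\,(d(a)^*)^*\,)$, so $d(a^*)=d(a)^*$). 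Then one substitutes $S(a)=T(a^*)^*$ into the two identities from Theorem \ref{t maps r-*-anti-derivable at zero}, takes $*$ of everything, and replaces $a,b$ by $a^*,b^*$ to recover exactly properties $(i)$ and $(ii)$ in the statement, namely $d([a,b])+[a,b]\eta^*+\eta[a,b]=0$, $T(ab)=aT(b)+T(a)b-a\eta b$, and $T(a)=d(a)+\eta a$. The converse direction $(b)\Rightarrow(a)$ is equally routine: assume the identities, take $a^*b=0$, expand $T(b)a^*+bT(a)^*$ using $T(a)=d(a)+\eta a$ and $d(a)^*=d(a^*)$, collect terms into $d(ba^*)=d([b,a^*])$ and commutator expressions, and invoke $(i)$; alternatively one simply notes that $(b)$ for $T$ is equivalent, under the same dictionary $S(a)=T(a^*)^*$, $\tilde d(a)=d(a^*)^*$, $\tilde\xi=\eta^*$, to $(b)$ of Theorem \ref{t maps r-*-anti-derivable at zero} for $S$, and then applies the already proven implication $(b)\Rightarrow(a)$ there.

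For the final sentence of the theorem, the improvement when $A$ is unital or $X$ is a dual Banach $A$-bimodule, I would again just transfer: under these hypotheses Theorem \ref{t maps r-*-anti-derivable at zero} gives $\tilde\xi\in X$ and $\tilde d:A\to X$, and since $\eta=\tilde\xi^*$ and the bimodule involution maps $X$ to $X$, we get $\eta\in X$ and $d:A\to X$ as well; alternatively one repeats verbatim the argument of Remark \ref{remark extra conclusions for main theorem 1} (if $A$ is unital, $T(1)=d(1)+\eta=\eta\in X$; if $X$ is a dual space, cite \cite[Proposition 4.3 or Theorem 4.6]{AlBreExVill09}).

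I do not expect a genuine obstacle here; the proof is a bookkeeping exercise in the involution dictionary, and the only thing to be careful about is the placement of the $\eta$ versus $\eta^*$ and of the left versus right multiplication when passing between $T$ and $S=T((\cdot)^*)^*$ — an error there would put the adjoint on the wrong side of the module action in $(i)$. The safest route, which I would follow in writing the final proof, is to state the equivalence ``$T$ satisfies (a) [resp. (b)] for the data $(d,\eta)$ $\iff$ $S$ satisfies (a) [resp. (b)] of Theorem \ref{t maps r-*-anti-derivable at zero} for the data $(d(\,(\cdot)^*)^*,\ \eta^*)$'' as a single verified dictionary, and then let both implications of Theorem \ref{t maps l-*-anti-derivable at zero} fall out of the corresponding implications of Theorem \ref{t maps r-*-anti-derivable at zero}.
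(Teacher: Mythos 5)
Your proposal is correct and coincides with the paper's own proof: the paper likewise reduces to Theorem \ref{t maps r-*-anti-derivable at zero} via $S(a):=T(a^*)^*$ and sets $\eta=\xi^*$, leaving the involution bookkeeping to the reader. (Note that your $d(a):=\tilde d(a^*)^*$ equals $\tilde d(a)$ since $\tilde d$ is a $^*$-derivation, so you are in fact keeping the same derivation, exactly as the paper does.)
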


\begin{proof} By observing that $T$ is l-$^*$-anti-derivable at zero if and only if the mapping $S : A\to X$, $S(a) :=T(a^*)^*$ ($\forall a\in A$) is r-$^*$-anti-derivable at zero, an application of Theorem \ref{t maps r-*-anti-derivable at zero} tells that this is the case if and only if there exists a $^*$-derivation $d:A\to X^{**}$ and an element $\xi \in X^{**}$ satisfying the following properties: \begin{enumerate}[$(i)$]\item  $d([a,b]) + [a,b] \xi + \xi^* [a,b]=0,$ for all $a,b\in A$;
\item $S(a b) = a S(b) + S(a) b - a \xi b,$ and $S(a) = d(a) + a \xi $ for all $a,b\in A$.
\end{enumerate} Taking $\eta = \xi^*$ the rest can be straightforwardly checked by the reader.
\end{proof}

\begin{corollary}\label{c maps l-*-anti-derivable at zero bidual} Let $T: A\to A^{**}$ be a bounded linear operator where $A$ is a C$^*$-algebra. Then the following statements are equivalent:\begin{enumerate}[$(a)$]\item $T$ is l-$^*$-anti-derivable at zero;
\item There exists a $^*$-derivation $d:A\to A^{**}$ and an element $\eta \in A^{**}$ satisfying the following properties: \begin{enumerate}[$(i)$]\item  $d([a,b]) + [a,b] \eta^* + \eta [a,b]=0,$ for all $a,b\in A$;
\item $T(a b) = a T(b) + T(a) b - a \eta b,$ and $T(a) = d(a) + \eta a $ for all $a,b\in A$.
\end{enumerate}
\end{enumerate}
\end{corollary}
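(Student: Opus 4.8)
The plan is to deduce this corollary from Theorem~\ref{t maps r-*-anti-derivable at zero bidual} by exactly the device used to pass from Theorem~\ref{t maps r-*-anti-derivable at zero} to Theorem~\ref{t maps l-*-anti-derivable at zero}. Recall that $A^{**}$, regarded as a Banach $A$-bimodule with its natural product, is a Banach $^*$-$A$-bimodule for its natural involution (as noted right after the definition of $A$-bimodule involution). Consider the map $S:A\to A^{**}$ given by $S(a):=T(a^*)^*$; it is a bounded linear operator (the composition of the isometric involutions of $A$ and $A^{**}$ with the bounded map $T$ is bounded, and it is linear since $S(\lambda a)=(\overline{\lambda}T(a^*))^*=\lambda S(a)$), and $S(a)^*=T(a^*)$ for all $a\in A$.

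First I would verify that $T$ is l-$^*$-anti-derivable at zero precisely when $S$ is r-$^*$-anti-derivable at zero. Indeed, if $a,b\in A$ satisfy $ab^*=0$, then, writing $c=a^*$ and $e=b^*$, we get $c^*e=ab^*=0$; applying the l-$^*$-anti-derivability hypothesis on $T$ to the pair $(c,e)$ gives $T(e)c^*+e\,T(c)^*=0$, i.e.\ $T(b^*)a+b^*\,T(a^*)^*=0$, which says exactly $S(b)^*a+b^*S(a)=0$. Reversing this chain yields the converse.

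Next I would apply Theorem~\ref{t maps r-*-anti-derivable at zero bidual} to $S$ --- it is here that the bidual version is needed, rather than Theorem~\ref{t maps r-*-anti-derivable at zero}, because $A^{**}$ need not be an essential $A$-bimodule (e.g.\ $A=c_0$, $A^{**}=\ell_\infty$). This provides a $^*$-derivation $d:A\to A^{**}$ and an element $\xi\in A^{**}$ with $d([a,b])+[a,b]\xi+\xi^*[a,b]=0$, with $S(ab)=aS(b)+S(a)b-a\xi b$, and with $S(a)=d(a)+a\xi$, for all $a,b\in A$. Now put $\eta:=\xi^*$, so $\eta^*=\xi$, and translate back through $T(a)=S(a^*)^*$, using that $d(a^*)^*=d(a)$ (as $d$ is a $^*$-derivation) and the bimodule involution identities $(ax)^*=x^*a^*$, $(xa)^*=a^*x^*$. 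Applying $\ast$ to $S(a^*)=d(a^*)+a^*\xi$ gives $T(a)=d(a)+\xi^*a=d(a)+\eta a$; applying $\ast$ to $S(b^*a^*)=b^*S(a^*)+S(b^*)a^*-b^*\xi a^*$, and using $S(b^*a^*)^*=T(ab)$, $S(a^*)^*=T(a)$, $S(b^*)^*=T(b)$, gives $T(ab)=aT(b)+T(a)b-a\eta b$; and $d([a,b])+[a,b]\xi+\xi^*[a,b]=0$ is, with $\xi=\eta^*$, exactly $d([a,b])+[a,b]\eta^*+\eta[a,b]=0$. The same $d$ works throughout, so $(a)\Rightarrow(b)$ is established.

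For $(b)\Rightarrow(a)$ one may reverse the previous identities, or argue directly: given $d$ and $\eta$ as in $(b)$ and $a,b\in A$ with $a^*b=0$, expand $T(b)a^*+b\,T(a)^*=(d(b)+\eta b)a^*+b(d(a)+\eta a)^*=d(b)a^*+b\,d(a^*)+\eta(ba^*)+(ba^*)\eta^*=d(ba^*)+\eta[b,a^*]+[b,a^*]\eta^*$, where we used that $d$ is a derivation, that $d(a)^*=d(a^*)$, and that $ba^*=[b,a^*]$ because $a^*b=0$; by condition $(i)$ this is $0$. I do not anticipate a genuine obstacle: the argument is essentially a translation, and the only point requiring care is keeping track of the bimodule involution identities and of the left/right order of multiplication when moving between $S$ and $T$.
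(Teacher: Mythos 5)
Your proof is correct and follows exactly the route the paper intends: reduce to the r-$^*$-anti-derivable case via $S(a)=T(a^*)^*$ (as in the proof of Theorem \ref{t maps l-*-anti-derivable at zero}), invoke the bidual version Theorem \ref{t maps r-*-anti-derivable at zero bidual} because $A^{**}$ need not be an essential $A$-bimodule, and set $\eta=\xi^*$. All the involution and order-of-multiplication bookkeeping checks out, including the direct verification of $(b)\Rightarrow(a)$ using $ba^*=[b,a^*]$ when $a^*b=0$.
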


\medskip
\medskip

\textbf{Acknowledgements} A.M. Peralta partially supported by the Spanish Ministry of Science, Innovation and Universities (MICINN) and European Regional Development Fund project no. PGC2018-093332-B-I00, Junta de Andaluc\'{\i}a grant FQM375 and Proyecto de I+D+i del Programa Operativo FEDER Andalucia 2014-2020, ref. A-FQM-242-UGR18. \smallskip

This work was supported by the Deanship of Scientific Research (DSR), King Abdulaziz University. The authors, therefore, gratefully acknowledge DSR technical and financial support.\smallskip

The results in this paper are part of the first author's Ph.D. thesis at King Abdulaziz University.\smallskip

We acknowledge the thorough revision made by the anonymous referee including several sharp comments on Theorem 6.

\end{document}